\RequirePackage{fix-cm}
\documentclass[smallextended, envcountsame]{svjour3}
\smartqed
\def\makeheadbox{\relax}
\journalname{}

\usepackage[unicode = false,
    pdftoolbar = true,
    colorlinks = true,
    linkcolor = blue,
    citecolor = blue,
    filecolor = black,
    urlcolor = blue,
    breaklinks = true]{hyperref}
\usepackage[ruled]{algorithm2e}
\usepackage{float}
\usepackage{booktabs}
\usepackage[utf8]{inputenc}
\usepackage{listings}           
\usepackage{amsmath,amssymb}            
\usepackage{xcolor}             
\usepackage{graphicx}           
\usepackage{verbatim}
\usepackage{geometry}
\usepackage{caption}
\usepackage{enumerate}
\usepackage[thinlines, thiklines]{easybmat}
\usepackage{hhline}
\usepackage{nicematrix}
\usepackage{mathtools}
\usepackage[noblocks]{authblk}
\usepackage[section]{placeins}

\DeclareMathOperator*{\argmax}{arg\,max}
\DeclareMathOperator*{\argmin}{arg\,min}

\DeclareMathOperator*{\Id}{\mathbf{I}}

\DeclareMathOperator{\spann}{span}
\DeclareMathOperator{\pspan}{pspan}

\newcommand{\R}{\mathbb{R}}

\newcommand{\cm}[1]{\operatorname{cm}\left(#1\right)}

\newcommand{\V}{\mathrm{cV}}

\newcommand{\bbm}{\begin{bmatrix}}
\newcommand{\ebm}{\end{bmatrix}}

\newcommand{\zero}{\mathbf{0}}
\newcommand{\Pe}{\mathcal{P}}

\newcommand{\vv}{\mathbf{v}}
\newcommand{\pe}{\mathbf{p}}
\newcommand{\Proj}{\operatorname{Proj}}
\newcommand{\dom}{\operatorname{dom}}
\newcommand{\intt}{\operatorname{int}}
\newcommand{\inttD}{D^\mathrm{o}}
\newcommand{\cl}{\operatorname{cl}}

\newcommand{\sgn}{\operatorname{sgn}}

\newcommand{\N}{\operatorname{\mathbb{N}}}

\newcommand{\AM}{\mathbf{A}}
\newcommand{\A}{\operatorname{\mathcal{A}}}

\newcommand{\Set}{\mathcal{S}}

\newcommand{\e}{\mathbf{e}}

\newcommand{\yy}{\mathbf{y}}

\newcommand{\x}{\mathbf{x}}
\newcommand{\y}{\mathbf{y}}

\newcommand{\one}{\mathbf{1}}

\newcommand{\Pee}{\mathbb{P}}
\newcommand{\pee}{\mathbf{p}}
\newcommand{\ee}{\mathbf{e}}

\newcommand{\ess}{\mathcal{S}}

\newcommand{\tee}{\mathcal{T}}

\newcommand{\pp}{\mathcal{P}}

\newcommand{\bpspan}{\delta\pspan(\ess_i)}

\newcommand{\Sn}{\mathbb{S}^n}
\DeclareMathOperator{\sm}{sm}

\newcommand{\dd}{\mathbf{d}}
\newcommand{\uu}{\mathbf{u}}

\DeclareMathOperator{\CM}{cm}
\DeclareMathOperator{\npspan}{\overline{pspan}}

\DeclareMathOperator{\subjectto}{subject \, to}
\DeclareMathOperator{\nul}{null}

\DeclareMathOperator{\inte}{int}

\newcommand{\cmf}{\CM(f;\x)}

\DeclareMathOperator{\polar}{polar}
\DeclareMathOperator{\CV}{cV}

\newcommand{\ph}{\phantom{-}}

\newcommand{\ssig}{\boldsymbol{\sigma}}

\newcommand{\xo}{\mathbf{x}_0}
\newcommand{\bfs}{\mathbf{s}}

\begin{document}

\title{\centering The cosine measure of a function at a point}
\author{\centering  Warren Hare \and Gabriel Jarry-Bolduc \and Chayne Planiden}
\institute{
Warren Hare \at
              Department of Mathematics, University of British Columbia, Kelowna, 
              British Columbia, Canada. \\ Hare's research is partially funded by the Natural Sciences and Engineering Research Council (NSERC) of Canada, Discovery Grant \#2023-03555. ORCID 0000-0002-4240-3903\\
              \email{warren.hare@ubc.ca}           
           \and
          Gabriel Jarry-Bolduc\at
          Department of Mathematics and Statistics, American University of Sharjah, Sharjah, United Arab Emirates. \\ ORCID 0000-0002-1827-8508\\
              \email{gabjarry@alumni.ubc.ca} 
              \and
               Chayne Planiden   \at
               School of Mathematics and Physics, University of Wollongong, Wollongong, NSW, 2500, Australia. \\ Research supported by University of Wollongong. ORCID 0000-0002-0412-8445 \\ \email chayne@uow.edu.au 
              }
\date{\today}

\titlerunning{Cosine measure of a function}

\maketitle
\begin{abstract}
The cosine measure of a set of vectors in $\R^n$ measures how well the set covers all directions in $\R^n$. It identifies the direction furthest, in angle, from the set. It is used in the convergence theory of various optimization algorithms, but also highlights interesting geometric properties of sets.  For example, the cosine measure of a set $S$ is greater than zero if and only if given any $\mathcal{C}^1$ function $f$ at a point $\bold{x}$ where the gradient is nonzero, $S$ must contain a descent direction of $f$ at $\bold{x}$.  In this paper, we examine the question of what can be said when the function $f$ is non-differentiable or if it has a gradient equal to the zero vector.  To examine these cases, we introduce the novel concept of the {\em cosine measure of a function} at a point.  This value provides an infimum on the value of the cosine measure that a set of vectors requires to guarantee it contains a descent direction of the function at the point of interest.  We present mathematical theory around this concept, including examples showing that the cosine measure of a smooth function can have any value in $[-1,1]$.  We further present algorithms to compute the cosine measure of a function, and  examples demonstrating the algorithm on smooth and nonsmooth functions.  These results also shed light on the the cosine measure of infinite sets and nonconvex cones.
\end{abstract}

\keywords{Cosine measure \and Descent directions \and Infinite sets \and Finite-max function \and $\ell_1$ norm}
\section{Introduction}\label{sec:intro}
In \emph{Derivative-free optimization} (DFO), \emph{direct search methods} usually rely on \emph{positive spanning sets} to explore the space around an incumbent solution. A notion closely related to positive spanning sets is the concept of \emph{positive bases}. A positive basis can be thought of as a minimal positive spanning set. In other words, if any one vector in a positive basis is removed, then it is no longer a positive spanning set. The properties of positive spanning sets and positive bases have been studied in \cite{Audet2011,Conn2009,cornaz2024characterization,Davis1954,Hare2017,Hare2016cardinality,Jarry2023thesis,RMLewis_VTorczon_1996,Reay1966,Regis2016,Romanowicz1987,Shephard1971}. In \cite{hare2024using,marcus1981minimal,marcus1984gale}, the notion of \emph{positive $k$-spanning set} is studied and its value in DFO is briefly discussed. 

One property of positive spanning sets (that make them useful in DFO methods) is that if the gradient of a function at a point is well-defined and not equal to the zero vector, any positive spanning set contains at least one descent direction (and at least one ascent direction) for the function at the point. To decide whether a given finite set of vectors is a positive spanning set of $\R^n$, the \emph{cosine measure} can be computed \cite{kolda2003}. It is known that a set of vectors is a positive spanning set of $\R^n$ if and only if the cosine measure is greater than zero \cite{audet2024cosine}.   The cosine measure is  a real number between $-1$ and $1$ inclusively. Its value quantifies the largest angle that can be created between the set and an arbitrary vector in $\R^n$; with values near $1$ representing that this largest angle is close to $0$ (i.e., $\cos(\theta) \approx 1$).  The structure of positive bases with maximal cosine measure is investigated in \cite{hare2023nicely,Jarry2019,naevdal2018}. The convergence theory supporting a direct search method usually assumes that the cosine measure values of the positive spanning sets utilized are bounded away from zero \cite{conn2009introduction}. 
The cosine measure is also used to define the notion of \emph{complexity measure} that arises in complexity bounds for direct search methods \cite{cavarretta2025,Dodangeh2016}.

The main goal  of  this paper  is to investigate the value of the cosine measure necessary to guarantee that a set of vectors contains a descent direction of a real-valued function $f$. When the gradient of $f$ at a point is well-defined and not equal to the zero vector, then a set of vectors with cosine measure value greater than zero guarantees that the set contains a descent direction of $f$ at the point. This is not necessarily true if the gradient is equal to the zero vector or if the gradient does not  exist. The notion of \emph{cosine measure of a function} emerges from the investigation of the two latter cases.  We present theoretical results and examples exploring this notion.

A deterministic algorithm for computing the cosine measure of a finite positive spanning set was proposed in \cite{hare2020deterministic}. A few months later, an alternative approach to compute the cosine measure of a finite set of vectors was introduced in \cite{regis2021}.  Further research has revealed that computing the cosine measure exactly is an NP-hard problem \cite{sphericalDiscJones2020}, and proposed heuristics to compute the value quickly in high dimension \cite{HareSun2025}.  Recently, the notion of cosine measure has been extended to consider \emph{cosine measure relative to a subspace} and an algorithm for its computation is presented in \cite{audet2024cosine}.  All of the aforementioned algorithms begin with the assumption that the set of vectors is finite.  

A secondary result of this current work is the development of algorithms to determine cosine measures where the set contains infinitely many vectors.  This leads to algorithms for computing the cosine measure of a classical type of nonsmooth function: the finite-max function. In this case, the Clarke subdifferential at the point of interest is the 
convex hull of the active gradients, in line with the 
general gradient-limit representation of Clarke subdifferentials for 
stratifiable functions established in \cite{drusvyatskiy2015clarke}. The 
set of non-descent directions for this type of function at the point of interest is  the polar of 
the cone generated by the active gradients.  The cosine measure of the function 
at a certain point quantifies the angular size of this cone, measuring how difficult 
it is for a finite set of directions to contain a descent direction. 
Whereas gradient-sampling methods \cite{burke2002approximating} address 
this difficulty by approximating the Clarke subdifferential through 
randomly sampled gradients, the approach in this paper is deterministic. 

The remainder of this paper is organized as follows. In Section \ref{sec:prel}, the notation is clarified, and background results and  necessary definitions  to understand the remainder of this paper are provided. In Section \ref{sec:cosineMeasureOfAFunction}, the concept of cosine measure of a function is defined and properties of this novel definition are introduced. Examples are provided showing  the cosine measure of a function can take any value between $-1$ and 1 inclusively when the gradient is equal to the zero vector.  In Section \ref{sec:algo}, pseudo-codes to compute the cosine measure of infinite sets are introduced. In Section \ref{sec:cmNonSmooth}, we show how to find the value of the cosine measure of a finite-max function. In particular,  a closed-form solution is provided for the $\ell_1$-norm function. Lastly, the main results of this paper are summarized, and future research directions are proposed in Section \ref{sec:conclusion}.

\section{Preliminaries} \label{sec:prel}

In this paper, we follow the standard notation found in \cite{rockwets}. A vector is written as a column vector and denoted with a bold-face lower-case letter. A matrix is denoted with a bold-face capital letter.  The transpose of matrix $\AM$ is denoted by $\AM^\top$.  We work in finite-dimensional space $\R^n$ with inner product $\x^\top \y=\sum_{i=1}^nx_iy_i$ and induced norm $\|\x\|=\sqrt{\x^\top \x}$. The identity matrix in $\R^{n \times n}$ is denoted by $\Id_n$. We use $\e_i \in \R^n$ where $i \in \{1, 2, \dots, n\},$ to denote the standard unit basis vectors in $\R^n$, i.e.\ the $i$\textsuperscript{th} column of $\Id_n.$ The zero vector in $\R^n$ is denoted by $\zero_n$ and the vector of all ones in $\R^n$ is denoted by $\one_n.$ When there is no ambiguity about the dimensions of the matrices or vectors considered, the subscript $n$ referring to the dimension may be dropped.

The notation  $f:\dom f \subseteq \R^n \to \R$ is used to refer to a real-valued function $f$ where the codomain is $\R$ and $\dom f$ is assumed to be a non-empty subset of $\R^n$ where $f$ is well-defined. The interior of a set $\Set$ is denoted by $\intt \Set$ and the closure of the set $\Set$ is denoted by $\cl \Set.$ The column space (span) of a set $\Set$  is denoted by $\spann(\Set)$. the set of all unit vectors in $\R^n$ is denoted by $\Sn.$ The definition of positive span follows.
\begin{definition}[Positive span]  Let $\Set$ be a non-empty set in $\R^n$. 
     The {\em positive span} of $\Set$ is denoted by $\pspan(\Set)$ and defined by
    $$\pspan(\Set) = \left\{\x \in \R^n: \x =\sum_{i=1}^k \lambda_i \dd_i, k \in \N, \dd_i \in \Set, \lambda_i \geq 0\right\}.$$
 The {\em normalized positive span} of $\Set$ is denoted by $\npspan(\Set)$ and defined by $\npspan(\Set)=\pspan(\Set) \; \cap \; \Sn.$
\end{definition}

Note that the positive span of $\Set$ is equivalent to the \emph{conical hull} of $\Set.$ When $\Set$ is finite,  $\pspan(\Set)$ is a \emph{polyhedral cone}.  The set $\Set$ is said to be a \emph{positive spanning set of $\R^n,$} or to \emph{positively span $\R^n$,} if and only if $\pspan(\Set)=\R^n.$ It is known that the cardinality of a positive spanning set of $\R^n$ is equal to or greater than $n+1.$  A finite positive spanning set of $\R^n$ with cardinality $s$ will be denoted by $\Pee_{n,s}.$ 

The following definition of cosine measure makes it possible to  consider infinite sets of vectors. The max operator in the original definition (see \cite{kolda2003}) is replaced by a supremum operator. 

\begin{definition}[Cosine measure] \cite{audet2024cosine}\label{def:cosinemeasure}  Let $\Set \subseteq \R^n$ be a non-empty (possibly infinite) set of unit vectors.
The \emph{cosine measure of $\Set$} is defined by 
    $$\cm{\Set} \ =\ \min_{\substack{\uu \, \in \, \R^n\\\Vert \uu \Vert=1}} \sup_{\dd \, \in \, \Set} \uu^\top \dd,$$
and the  \emph{cosine vector set of $\Set$}, denoted by $\V(\Set)$, is defined by $$\V(\Set) \ =\ \argmin_{\substack{\uu \, \in \, \R^n\\ \Vert \uu \Vert=1}}\sup_{\dd\, \in\, \Set} \uu^\top \dd.$$
In the case where $\Set$ is empty, we define the cosine measure of $\Set$ to be equal to $-1$ and the cosine vector set of $\ess$ to be the empty set. 
\end{definition}

\begin{definition}[active set]\label{def:allactivity} 
Let 
    $\Set \subseteq \R^n$ be a non-empty set of unit  vectors.

The \emph{active set for $\Set$ at a cosine vector $\uu \in \V(\Set)$} is denoted by  $\A(\Set,\uu)$ and defined by $$\A(\Set,\uu)\ =\ \left \{\dd \in \Set\,:\, \dd^\top\uu=\CM(\Set) \right \}.$$
\end{definition}

When $\Set$ is finite and $\pspan(\Set)=\R^n$, it is known that $\A(\Set,\uu)$ contains a basis of $\R^n$ for any $\uu \in \CV(\Set)~\cite[Cor. 18]{hare2020deterministic}.$
The following result clarifies the relation between cosine measures and positive spanning sets of $\R^n.$
\begin{proposition} \label{prop:cosinemeasurepos}
Let $\Set$ be a non-empty (possibly infinite) set of unit vectors. Then $\CM(\Set)>0$ if and only if $\Set$ is a positive spanning set of $\R^n$.  
\end{proposition}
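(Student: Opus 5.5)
The plan is to prove the two directions separately, working with the function $g:\R^n\to\R$, $g(\uu)=\sup_{\dd\in\Set}\uu^\top\dd$, so that $\CM(\Set)=\min_{\|\uu\|=1} g(\uu)$. First I will record that this minimum is attained even when $\Set$ is infinite. Since every $\dd\in\Set$ is a unit vector, $|g(\uu)|\le\|\uu\|$, so $g$ is finite everywhere. As a pointwise supremum of linear functions, $g$ is convex. A finite convex function on $\R^n$ is continuous, so $g$ attains its minimum on the compact set $\Sn$. This is exactly what makes the strict inequality $\CM(\Set)>0$ follow from pointwise positivity of $g$ on $\Sn$.

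For the direction "$\pspan(\Set)=\R^n \Rightarrow \CM(\Set)>0$", I will fix any unit vector $\uu$. Since $\uu\in\pspan(\Set)$, I can write $\uu=\sum_{i=1}^k\lambda_i\dd_i$ with $\lambda_i\ge 0$ and $\dd_i\in\Set$. Then
\[
1=\uu^\top\uu=\sum_{i=1}^k\lambda_i\,\uu^\top\dd_i ,
\]
so some $\uu^\top\dd_i>0$, and hence $g(\uu)>0$. Because this holds for every $\uu\in\Sn$ and the minimum over $\Sn$ is attained, $\CM(\Set)=g(\uu^*)>0$ for a minimizer $\uu^*$.

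For the converse, I will assume $\pspan(\Set)\neq\R^n$ and produce a unit $\uu$ with $g(\uu)\le 0$, which gives $\CM(\Set)\le 0$. The set $K=\pspan(\Set)$ is a convex cone. The obstacle is that for infinite $\Set$ the cone $K$ need not be closed, so I cannot directly separate a point from $K$.

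I plan to handle this by passing to $\cl K$. Suppose, for contradiction, that $\cl K=\R^n$. Then $\intt(\cl K)=\R^n$. For a convex set $K$ in $\R^n$ with nonempty interior of its closure, one has $\intt K=\intt(\cl K)$ (the standard relative-interior result in \cite{rockwets}). This would give $K=\R^n$, a contradiction. Hence $\cl K$ is a closed convex cone different from $\R^n$. Its polar $(\cl K)^\circ$ is then a nontrivial closed cone, since the bipolar theorem gives $(\cl K)^{\circ\circ}=\cl K\ne\R^n$. So I can pick a unit vector $\uu\in(\cl K)^\circ$.

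This $\uu$ satisfies $\uu^\top\dd\le 0$ for all $\dd\in\Set\subseteq K$. Therefore $g(\uu)\le 0$, and so $\CM(\Set)\le 0$. I expect the only delicate point to be the non-closedness step above; everything else is routine.
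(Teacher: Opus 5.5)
Your proof is correct. It differs from the paper's only in being self-contained: the paper gives no argument and simply defers to \cite[Proposition 18]{audet2024cosine}.

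Your version isolates the two places where an infinite $\Set$ could cause trouble.
\begin{enumerate}[(i)]
\item You justify that the outer minimum in the definition is attained, via convexity and hence continuity of $g(\uu)=\sup_{\dd\in\Set}\uu^\top\dd$. Noting that $g$ is $1$-Lipschitz would do equally well. This is exactly what turns pointwise positivity of $g$ on $\Sn$ into the strict inequality $\CM(\Set)>0$.
\item In the converse you handle the fact that $\pspan(\Set)$ need not be closed. You show $\cl\pspan(\Set)\neq\R^n$ through $\intt K=\intt(\cl K)$ for convex $K$, then take a unit vector in the polar. A slightly shorter alternative is to separate a point $\x\notin\pspan(\Set)$ directly from the convex cone $\pspan(\Set)$. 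In finite dimensions a point outside a convex set can always be non-strictly separated from it without any closedness assumption, and the cone property forces the separating normal into the polar.
\end{enumerate}

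The citation buys brevity and ties the paper to the cosine-measure literature it builds on. Your route makes the proposition independent of that reference and shows that only elementary finite-dimensional convex analysis is needed, with no finiteness of $\Set$ required.
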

\begin{proof} The proof follows immediately from \cite[Proposition 18]{audet2024cosine}. \qed
\end{proof}

Next, we recall the definition of descent direction. 

\begin{definition}[Descent direction]
Let $f:\dom f \subseteq \R^n \to \R$ and $\x \in \dom f.$ A unitary vector $\dd \in \R^n$  is said to be a \emph{descent direction} of $f$  at $\x$  if and only if there exists a scalar $\overline{t}>0$ such that $
f(\x+t\dd)<f(\x) \quad \text{for all $t \in (0, \overline{t}]$}.$   
\end{definition}

As mentioned in Section \ref{sec:intro}, an important property  of  positive spanning sets is that if $\nabla f$ is well-defined at  $\x \in \dom f$ and  $\nabla f(\x) \neq \zero_n,$ we  have $-\nabla f(\x)^\top \dd >0$ 
for at least one vector $\dd \in \Pee_{n,s}.$ In other words, there exists at least one descent direction of $f$ at $\x$  in a positive spanning set of $\R^n$ \cite{conn2009introduction}.
The statement does not extend to non-differentiable functions or points where the gradient is equal to the zero vector, as the next two examples illustrate. 

\begin{example} \label{ex:pssfailsone}
Consider the function $f(\x)=\max\{\vert x_1 \vert, \vert x_2 \vert \}$   at the point $\xo=\bbm 1 &1\ebm^\top$ and the set  $\Pee_{2,4}=\left \{\e_1,\e_2,-\e_1,-\e_2 \right \}.$ Then $\Pee_{2,4}$ is a positive spanning set of $\R^n$ with $\CM(\Pee_{2,4})= 1/\sqrt{2}.$ However, $\Pee_{2,4}$ does not contain a descent direction of $f$ at $\xo.$
\end{example}

\begin{example} \label{ex:pssfailstwo}
Consider the function $f(\x)=(x_1-x_2)^2-(x_1+x_2)^2$ at the point $\xo=\zero$ and the set  $\Pee_{2,4}=\left \{\e_1,\e_2,-\e_1,-\e_2 \right \}.$ Then, $\Pee_{2,4}$ is a positive spanning set of $\R^n$ with $\CM(\Pee_{2,4})= 1/\sqrt{2}.$ However, $\Pee_{2,4}$ does not contain a descent direction of $f$ at $\xo.$
\end{example}

Notice that in Examples \ref{ex:pssfailsone} and \ref{ex:pssfailstwo}, if a set of vectors   with  cosine measure strictly greater than  $1/\sqrt{2}$ is utilized, then the set will contain a descent direction of $f$ at the given respective point. We conclude  that a ``sufficiently good'' set of vectors  in  terms of cosine  measure may be able to find a descent direction in  the non-differentiable case or when the gradient is equal to the zero vector.  The question of how to quantify ``sufficiently good'' leads to the definition of  \emph{cosine measure of a function}.    

\section{Cosine measure of a function} \label{sec:cosineMeasureOfAFunction}

In this section, we introduce the definition of cosine measure of a function. We will see that it provides information on the quality that a set of vectors must have to ensure a descent direction of $f$ is found at $\x$. We investigate the properties of the cosine measure of a function, and provide examples in $\R^2$ that show the cosine measure of a function can take any value between -1 and 1 (inclusive) when the gradient is equal to the zero vector or does not exist.  

Henceforth, the set of all unit vectors in $\R^n$ is denoted by $\Sn.$ The set of all unit descent directions of $f$ at $\x$ is denoted by $D(f;\x).$ The complement of $D(f;\x)$ in $\Sn$ is denoted by $D^c(f;\x)$, i.e., 
    $$\begin{array}{rcl}
       D(f;\x)  &=& \{ \dd \in \Sn : \dd \mbox{ is a descent direction of } f \mbox{ at } \x \}, \\
       D^c(f;\x) &=& \Sn \setminus D(f;\x).
    \end{array}$$
We simply write $D$ and $D^c$ when it is clear from the context what the function $f$ and the point of interest $\x$ are.

\begin{definition}[Cosine measure and cosine vector set of $f$ at $\x$] \label{def:cmfunction}
Let $f:\dom f \subseteq \R^n \to \R$ and $\x \in \dom f.$ The cosine measure of $f$ at $\x$  is denoted by $\cm{f;\x}$ and defined by
$$\cm{f;\x}=\min_{\substack{\uu \, \in \, \R^n\\\Vert \uu \Vert=1}} \sup_{\dd \in D^c} \uu^\top \dd$$
provided $D^c \neq \emptyset.$ If $D^c= \emptyset,$ then $\cm{f;\x}$ is defined to be $-1$.\\
The cosine vector set of $f$ at $\x$ is denoted by $\V(f;\x)$  and defined by  $$\V(f;\x)=\argmin_{\substack{\uu \, \in \, \R^n\\\Vert \uu \Vert=1}} \sup_{\dd \in D^c} \uu^\top \dd$$ provided $D^c \neq \emptyset.$ If $D^c=\emptyset,$ then $\V(f;\x)$ is defined to be $\emptyset.$
\end{definition}

In the previous definition,  note that the supremum operator may be replaced by a max operator  by considering the set $\cl D^c$ rather than $D^c.$  It follows that when $D^c \neq \emptyset,$
$$\cm{f;\x}=\min_{\substack{\uu \, \in \, \R^n\\\Vert \uu \Vert=1}} \max_{\dd \in \cl D^c} \uu^\top \dd \quad \text{and} \quad \V(f;x)=\argmin_{\substack{\uu \, \in \, \R^n\\\Vert \uu \Vert=1}} \max_{\dd \in \cl D^c} \uu^\top \dd.$$

  Note that $\V(f;\x)$ is non-empty if and only if  $D^c(f;\x)$ is non-empty.
 Observe that, when $D^c \neq \emptyset$, the definition of the cosine measure of $f$ at $\x$ can be reformulated in terms of  the definition of cosine measure of a set: $$\cmf=\cm{D^c}=\cm{\cl D^c}.$$  In words, the cosine measure of $f$ at $\x$ is equal to the cosine measure of the set of all unit non-descent directions of $f$ at $\x.$ 

Theorem \ref{thm:valueofcmf} clarifies the relation between the cosine measure of a function at a point and the cosine measure of a set of vectors. First, we recall a result about the effect of an orthogonal matrix  on a set of unit vectors.

\begin{lemma}\label{lem:orthomatrix} 
    Let $\ess \subseteq \R^n$ be a closed set of  unit vectors and $R \in \R^{n \times n}$ be an orthogonal matrix.  Define the closed set $\ess_R = \{ R\dd : \dd \in \ess \}$.
    Then $\CM(\ess)=\CM(\ess_R).$
\end{lemma}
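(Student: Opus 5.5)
The plan is to show that the inner supremum transforms correctly under $R$, and then that the outer minimization over unit vectors is unchanged because $R$ maps the unit sphere bijectively onto itself. First I will check that $\ess_R$ is a legitimate argument for Definition~\ref{def:cosinemeasure}. Since $R^\top R=\Id_n$, we have $\|R\dd\|^2=\dd^\top R^\top R\dd=\|\dd\|^2=1$ for every $\dd \in \ess$, so $\ess_R$ consists of unit vectors. It is closed because $\dd\mapsto R\dd$ is a homeomorphism of $\R^n$. It is non-empty exactly when $\ess$ is. In the empty case both cosine measures equal $-1$ by convention, so I will assume from now on that $\ess\neq\emptyset$.

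Next I fix a unit vector $\uu$ and compute the inner supremum for $\ess_R$:
$$\sup_{\mathbf{w}\in\ess_R}\uu^\top\mathbf{w}=\sup_{\dd\in\ess}\uu^\top R\dd=\sup_{\dd\in\ess}(R^\top\uu)^\top\dd .$$
Writing $g_{\ess}(\uu)=\sup_{\dd\in\ess}\uu^\top\dd$, this identity reads $g_{\ess_R}(\uu)=g_{\ess}(R^\top\uu)$.

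The map $\uu\mapsto R^\top\uu$ is a bijection of $\Sn$ onto itself, with inverse $\mathbf{v}\mapsto R\mathbf{v}$. Hence the sets of values $\{g_{\ess_R}(\uu):\uu\in\Sn\}$ and $\{g_{\ess}(\mathbf{v}):\mathbf{v}\in\Sn\}$ coincide. It follows that their infima coincide, and that one infimum is attained if and only if the other is. This gives $\CM(\ess_R)=\CM(\ess)$. As a by-product, $\V(\ess_R)=R\,\V(\ess)$.

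The only step that needs care is that the definition uses a $\min$ rather than an $\inf$, so I must confirm the minimum exists. I will do this by showing that $g_{\ess}$ is continuous on the compact set $\Sn$. Each map $\uu\mapsto \uu^\top\dd$ with $\dd\in\ess$ is $1$-Lipschitz because $\|\dd\|=1$. A supremum of a family of $1$-Lipschitz functions is again $1$-Lipschitz, provided it is finite, and here it is finite since $|\uu^\top\dd|\le 1$. Continuity of $g_{\ess}$ on $\Sn$ follows, so the minimum is attained. The transfer argument above then shows the minimum for $\ess_R$ is attained as well.
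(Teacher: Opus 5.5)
Your proof is correct. The paper gives no argument of its own here: it simply cites \cite[Thm 3.4]{regis2021}. You instead give a self-contained change-of-variables proof. The identity $g_{\ess_R}(\uu)=g_{\ess}(R^\top\uu)$, combined with the fact that $\uu\mapsto R^\top\uu$ is a bijection of $\Sn$, shows the two outer minimizations range over the same set of values. This is presumably the mechanism behind the cited result, but your version has several advantages in this paper's setting. It works for arbitrary, possibly infinite, sets of unit vectors, which is the generality in which the lemma is stated and then used in Theorem~\ref{thm:valueofcmf}, and it does so without checking whether the cited theorem covers that case. It never uses closedness of $\ess$, so that hypothesis is superfluous for this lemma. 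It verifies, via the $1$-Lipschitz property of $g_{\ess}$, that the $\min$ in Definition~\ref{def:cosinemeasure} is actually attained. It also yields $\V(\ess_R)=R\,\V(\ess)$ as a by-product. The citation buys only brevity.
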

\begin{proof} This is an immediate consequence of \cite[Thm 3.4]{regis2021}.

\qed
\end{proof}

\begin{theorem}\label{thm:valueofcmf}
Let $f:\dom f \subseteq \R^n \to \R$ and $\x  \in \dom f$ and let $\Set \subseteq \R^n$ be a closed set of unit vectors (possibly infinite). If $\CM(\Set)>\cm{f;\x},$ then $\Set$ contains a descent direction of $f$ at $\x.$ Moreover, $\Set_R = \{ R\dd : \dd \in \Set\}$ contains a descent direction of $f$ at $\x$ for any orthogonal matrix $R \in \R^{n \times n}.$
\end{theorem}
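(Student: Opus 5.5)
Argue by contradiction. Suppose $\CM(\Set)>\cm{f;\x}$ but $\Set$ contains no descent direction of $f$ at $\x$, so $\Set\subseteq D^c(f;\x)$. First dispose of the degenerate case $D^c=\emptyset$. Then $\Set\subseteq D^c$ forces $\Set=\emptyset$, so $\CM(\Set)=-1=\cm{f;\x}$ by the conventions in Definition~\ref{def:cosinemeasure} and Definition~\ref{def:cmfunction}. This contradicts the strict inequality. (If $\Set$ is nonempty, every element must lie in $D$ and we are already done.)

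In the main case $D^c\neq\emptyset$ and $\Set\neq\emptyset$, use the observation after Definition~\ref{def:cmfunction} that $\cm{f;\x}=\CM(D^c)$. The key step is the monotonicity of the cosine measure under inclusion: if $\Set\subseteq D^c$, then for every unit $\uu$,
\[
\sup_{\dd\in\Set}\uu^\top\dd\le\sup_{\dd\in D^c}\uu^\top\dd .
\]
Take $\uu^*\in\V(f;\x)$, which is nonempty because $D^c\neq\emptyset$ (the minimum over the compact sphere is attained, since the sup of linear functions is continuous in $\uu$). Then
\[
\CM(\Set)\le\sup_{\dd\in\Set}(\uu^*)^\top\dd\le\sup_{\dd\in D^c}(\uu^*)^\top\dd=\cm{f;\x},
\]
which contradicts $\CM(\Set)>\cm{f;\x}$. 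Hence $\Set\cap D(f;\x)\neq\emptyset$.

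For the second claim, apply Lemma~\ref{lem:orthomatrix}. The set $\Set_R$ is a closed set of unit vectors, since $R$ is an isometry and a homeomorphism, and $\CM(\Set_R)=\CM(\Set)>\cm{f;\x}$. The first part, applied to $\Set_R$ in place of $\Set$, then gives a descent direction in $\Set_R$.

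The only potential obstacle is bookkeeping around the conventions: the empty-set and $D^c=\emptyset$ cases, and checking that the minimum in the definition is attained. Neither requires the closedness of $\Set$ in an essential way; closedness is only needed so that Lemma~\ref{lem:orthomatrix} applies as stated.
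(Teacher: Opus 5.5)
Your proof is correct, and it is essentially the paper's argument run in contrapositive form. The paper fixes $\uu\in\V(f;\x)$ and takes a maximizer $\vv^*$ of $\uu^\top\vv$ over $\Set$; this is the only place it uses closedness. It then shows $\vv^*\notin D^c$ via the chain $\uu^\top\vv^*\geq\CM(\Set)>\cm{f;\x}=\sup_{\dd\in D^c}\uu^\top\dd$, which is the same comparison you read backwards as monotonicity under inclusion. The paper also treats $\Set_R$ exactly as you do, through Lemma~\ref{lem:orthomatrix}.

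You are right that closedness is inessential for the first part: the paper's direct version only needs some $\vv\in\Set$ with $\uu^\top\vv>\cm{f;\x}$, which exists by the strict inequality. The one case you leave implicit is $D^c\neq\emptyset$ with $\Set=\emptyset$, and it is immediate because then $\CM(\Set)=-1\leq\cm{f;\x}$.
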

\begin{proof}
 Suppose $\CM(\Set)>\CM(f;\x) \geq -1.$  Since $\CM(\Set)>-1,$ the set $\Set$ contains at least two vectors (so $\Set \neq \emptyset$). 
 
 If  $D^c$ is empty, then $D=\Sn$ and it follows that all of the vectors in $\Set$ are descent directions of $f$ at $\x$.
 
 Suppose $D^c$ is non-empty. Let $\uu \in \V(f;\x).$   
We have  $$\sup_{\vv \in \, \Set} \uu^\top \vv \geq 
 \CM(\Set)>\sup_{\dd \in D^c} \uu^\top \dd=\CM(f;\x).$$
Let $\vv^* \in \Set$ be a  vector attaining the supremum $\sup_{\vv \in \Set} \uu^\top \vv.$ Note that such a vector exists in $\Set$, since $\Set$ is a closed set. If $\vv^* \in D^c$, then 
$$\CM(f;\x)=\sup_{\dd \in D^c} \uu^\top \dd \geq \uu^\top \vv^* \geq \CM(\Set),$$ which is a contradiction to the assumption $\CM(\Set)>\cmf.$
Therefore, $\vv^* \in D$, so is a descent direction of $f$ at $\x.$ 

The second results follows from Lemma \ref{lem:orthomatrix}.
\qed
\end{proof}

It is tempting to believe that given a set $\Set$, function $f$, and point $\x$; if $\Set_R$ contains a descent direction for any orthogonal matrix $R,$ then $\CM(\Set) \geq \CM(f;\x).$ But this is not necessarily true, as the following example illustrates.
\begin{example}\label{ex:threewilldo}
Consider the function $f(\x)=x_1^2-x_2^2$ at $\xo=\zero,$  and the minimal (optimal) positive basis $\Set=\left \{ \bbm 0\\ 1\ebm, \bbm \phantom{-}\frac{\sqrt{3}}{2}\\-\frac{1}{2} \ebm, \bbm -\frac{\sqrt{3}}{2}\\-\frac{1}{2} \ebm \right \}.$ Note that $\Set_R$ contains a descent direction of $f$ at  $\xo$ for any orthogonal matrix $R \in \R^{2 \times 2}.$ (This is visualized in Figure \ref{fig:PSSfail2}.) The cosine measure of $\Set$ is $\CM(\Set)=1/2,$ but $\CM(f;\zero)=1/\sqrt{2}.$
\begin{figure}[h]
\centering
\includegraphics[scale=0.4]{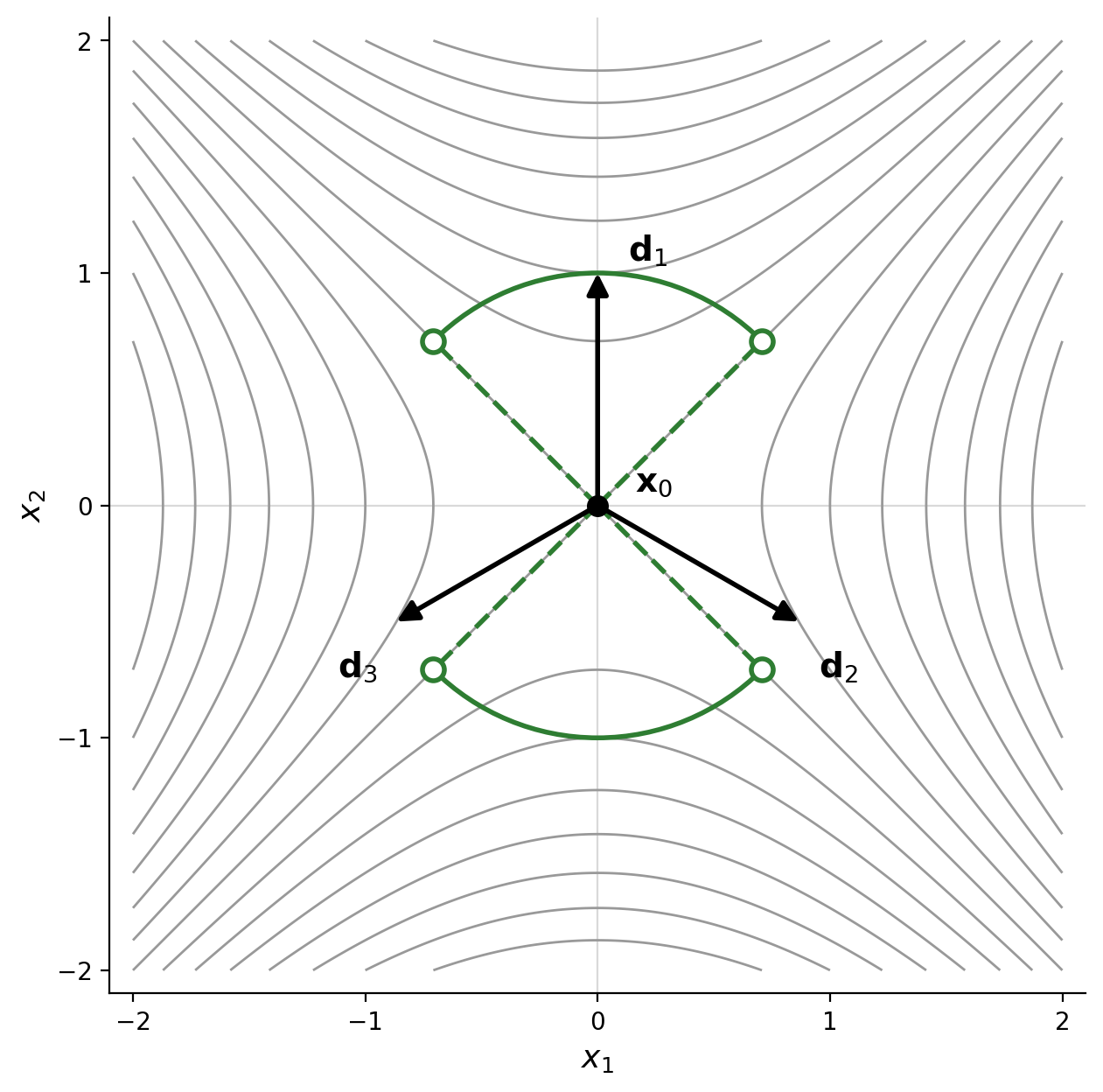}
\captionof{figure}{Any rotation or reflection of the set $\ess$ contains a descent direction of $f$ at $\xo=\zero$, but $\CM(\Set) = 1/2 < 1/\sqrt{2} = \CM(f;\zero)$. The two green spherical caps represent the descent directions of $f$ at 
$\x_0$.}
\label{fig:PSSfail2}
\end{figure} 
\end{example}

From the previous example, we conclude that it is possible to create a set $\Set$ with lower cosine measure than $\cmf$ and identify a descent direction even when the set is rotated. However, it is also possible to create a set $\Set$ with cosine measure equal to $\cmf$ such that $\Set$ does not contain a descent direction of $f$ at $\x$, provided $\cmf >-1$.

\begin{proposition}
Let $f:\dom f \subseteq \R^n \to \R$ and $\x \in \dom f.$  $Let \cmf>-1$. Then there exists a non-empty set of vectors $\Set$ such that $\CM(\Set)=\cmf$ and $\Set$ does not contain a descent direction of $f$ at $\x.$
\end{proposition}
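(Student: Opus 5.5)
The natural candidate is $\Set = \cl D^c(f;\x)$, the closure of the set of unit non-descent directions. The hypothesis $\cmf > -1$ forces $D^c \neq \emptyset$, since otherwise $\cmf$ would equal $-1$ by Definition~\ref{def:cmfunction}. So $\cl D^c$ is a non-empty closed set of unit vectors, and by the observation after Definition~\ref{def:cmfunction}, $\CM(\cl D^c) = \CM(D^c) = \cmf$. The difficulty is that $\cl D^c$ may contain descent directions: points of $D$ that are limits of non-descent directions.

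To avoid this, I will take $\Set = D^c$ itself rather than its closure. It is non-empty and by definition contains no descent direction. It is also a set of unit vectors, so Definition~\ref{def:cosinemeasure} applies to it even if it is not closed, because that definition uses a supremum. The identity $\CM(D^c) = \cmf$ is just the reformulation $\cmf = \cm{D^c}$ noted after Definition~\ref{def:cmfunction}. I would verify this carefully: for each unit $\uu$, the quantity $\sup_{\dd \in D^c} \uu^\top \dd$ in the two definitions is literally the same function of $\uu$, so the minima coincide. The minimum is attained because $\uu \mapsto \sup_{\dd\in D^c}\uu^\top\dd = \max_{\dd\in\cl D^c}\uu^\top\dd$ is continuous (a supremum of 1-Lipschitz functions) on the compact sphere.

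The only subtlety is whether the proposition implicitly requires $\Set$ to be closed, as in Theorem~\ref{thm:valueofcmf}. As stated it only asks for a non-empty set of vectors, so $\Set = D^c$ suffices. If closedness were wanted, one would need $D^c$ to be closed, which fails in general, and a more delicate construction would be required. I would flag this but not pursue it.

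Finally, the statement says ``set of vectors'' while the cosine measure is defined for unit vectors. Since $D^c \subseteq \Sn$, this causes no issue.
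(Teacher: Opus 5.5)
Your proposal is correct and matches the paper's proof, which is the one line ``take $\Set = D^c$.'' Your checks that $\cmf > -1$ forces $D^c \neq \emptyset$, and that $\CM(D^c) = \cmf$ holds by definition, are the details that one line leaves implicit.
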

\begin{proof} Take  $\ess=D^c.$ \qed  
\end{proof}


We now provide several results about the value of the cosine measure of $f$ at $\x.$ In examining Theorem \ref{thm:maintheorem}(iii), recall that, assuming $\pspan(\cl \Set) \neq \R^n,$ we have $\CM(\cl \Set)=0$ if and only if  $\cl \Set$ contains a non-empty proper subset $V$ such that $\pspan(V)=\spann(V)$ \cite[Corollary 29]{audet2024cosine}. This implies that $\cl \Set$ contains a positive spanning set of a linear subspace $L\subset \R^n$ with $1 \leq \dim(L) <n.$

\begin{theorem} \label{thm:maintheorem}
Let $f:\dom f \subseteq \R^n \to \R$  and  $\x \in \dom f.$  Then the  following hold:  
\begin{enumerate}[(i)]
\item \label{thm:CfxEqualMinus1} $\cmf=-1$ if and only if  $\vert D^c \vert \leq 1,$
\item \label{prop:closedhalfspace} $\cmf \leq 0$ if and only if   $D^c$  is contained in a closed half-space of $\R^n$ (whose boundary contains the origin),
\item \label{prop:minimalposb} if $\pspan(\cl D^c) \neq \R^n$, then $\cmf=0$ if and only if  $\cl D^c$ contains a positive spanning set of  a linear subspace $L\subset \R^n$ where $1\leq \dim(L) < n,$ 
\item \label{prop:CosineConstantEqualOne}  $\cm{f;\x}=1$ if and only if $\cl D^c=\Sn.$
\end{enumerate}
\end{theorem}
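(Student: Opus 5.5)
The plan is to reduce every item to a statement about the cosine measure of the set $\cl D^c$, using the identity $\cmf=\CM(D^c)=\CM(\cl D^c)$ recorded after Definition~\ref{def:cmfunction}. Each part then becomes a geometric fact about $\CM$ of a closed set of unit vectors. The degenerate case $D^c=\emptyset$ is handled directly by the convention $\cmf=-1$.

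\emph{Item (i).} If $D^c=\emptyset$, the convention gives $-1$. If $D^c=\{\dd\}$, choose $\uu=-\dd$; then $\sup_{D^c}\uu^\top\dd=-1$, so $\cmf=-1$. Conversely, suppose $D^c$ contains two distinct unit vectors $\dd_1\neq\dd_2$. For any unit $\uu$ we cannot have $\uu^\top\dd_1=\uu^\top\dd_2=-1$, because both equalities would force $\dd_1=\dd_2=-\uu$. Hence $\max(\uu^\top\dd_1,\uu^\top\dd_2)>-1$ for every $\uu$. This function is continuous on the compact set $\Sn$, so its minimum exceeds $-1$, and therefore $\cmf>-1$.

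\emph{Item (ii).} If $\cmf\le 0$, take a minimizer $\uu\in\V(f;\x)$ (the min is attained). Then $\uu^\top\dd\le 0$ for all $\dd\in D^c$, so $D^c$ lies in the closed half-space $\{\y:\uu^\top\y\le 0\}$. Conversely, if $D^c\subseteq\{\y:\mathbf{a}^\top\y\le0\}$ with $\mathbf{a}\neq\zero$, then $\uu=\mathbf{a}/\|\mathbf{a}\|$ gives a supremum that is $\le 0$. The case $D^c=\emptyset$ is trivial in both directions.

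\emph{Item (iii).} Apply \cite[Corollary 29]{audet2024cosine}, quoted just before the theorem, to the closed set $\Set=\cl D^c$ under the hypothesis $\pspan(\cl D^c)\neq\R^n$. That corollary says $\CM(\cl D^c)=0$ if and only if $\cl D^c$ contains a nonempty proper subset $V$ with $\pspan(V)=\spann(V)$. Setting $L=\spann(V)$, this means $V$ positively spans $L$. Since $V$ consists of unit vectors, $\dim L\ge1$. Also $\dim L<n$: otherwise $\pspan(\cl D^c)\supseteq\pspan(V)=\R^n$, contradicting the hypothesis. For the converse, a positive spanning set of such an $L$ is a proper subset $V$ of the required kind, since it cannot equal all of $\cl D^c$ without contradicting the hypothesis. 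The work here is mainly checking that the corollary's hypotheses match ours; this is the step I expect to need the most care.

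\emph{Item (iv).} If $\cl D^c=\Sn$, then for every unit $\uu$ we may take $\dd=\uu$, so the max equals $1$ and hence $\cmf=1$. Conversely, suppose some $\mathbf{w}\in\Sn\setminus\cl D^c$. Choose $\uu=\mathbf{w}$. Because $\cl D^c$ is compact and does not contain $\mathbf{w}$, we have $\max_{\dd\in\cl D^c}\mathbf{w}^\top\dd<1$: equality $\mathbf{w}^\top\dd=1$ for unit vectors forces $\dd=\mathbf{w}$. Hence $\cmf<1$. If $D^c=\emptyset$, the value is $-1\neq1$, which is consistent with this.
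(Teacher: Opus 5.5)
Items (i), (ii), (iv) and the forward half of (iii) follow the paper's proof essentially line for line. Your compactness argument in (i) just replaces the paper's appeal to an attained minimizer.

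The one real error is in the converse of (iii). You claim that a positive spanning set $V$ of $L$ ``cannot equal all of $\cl D^c$ without contradicting the hypothesis.'' This is false. If $V=\cl D^c$, then $\pspan(\cl D^c)=L\neq\R^n$, which is fully consistent with the hypothesis $\pspan(\cl D^c)\neq\R^n$. The paper's own example $f(\x)=-x_2^2$ at $\zero$ is exactly this situation: $\cl D^c=\{\e_1,-\e_1\}$ positively spans $L=\spann\{\e_1\}$, $\cmf=0$, and no nonempty proper subset of $\cl D^c$ positively spans a subspace. So if ``proper subset'' in the quoted corollary is read as ``proper subset of $\cl D^c$,'' the quoted statement itself fails on this example, and your matching step cannot be rescued in that form. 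The word must not exclude $V=\cl D^c$, which is consistent with ``contains'' in (iii). Under that reading your extra step is unnecessary; under the literal reading it is wrong.

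The converse does not need the corollary at all. Since $V\subseteq\cl D^c$ is nonempty, $\cmf=\CM(\cl D^c)$. Because $\pspan(\cl D^c)\neq\R^n$, Proposition~\ref{prop:cosinemeasurepos} gives $\CM(\cl D^c)\le 0$.

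For the reverse inequality, take any unit $\uu$ and let $\uu_L$ be its orthogonal projection onto $L$. If $\uu_L=\zero$, then $\uu^\top\vv=0$ for all $\vv\in V$. Otherwise write $\uu_L=\sum_j\lambda_j\vv_j$ with $\lambda_j\ge0$ and $\vv_j\in V$. Then $0<\|\uu_L\|^2=\sum_j\lambda_j\uu^\top\vv_j$, so some $\uu^\top\vv_j>0$. Either way $\max_{\dd\in\cl D^c}\uu^\top\dd\ge0$, and hence $\cmf=0$.

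Only the forward direction of (iii) genuinely relies on the cited corollary, exactly as in the paper.
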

\begin{proof}
$(i)$  Suppose $\cmf=-1$ and  $\vert D^c \vert >1.$ Let $\dd_1$  and $\dd_2$ be two  distinct unit vectors in $D^c.$ Then there exists a unit vector $\uu \in \R^n$ such that $\sup_{\dd \in D^c}  \uu^\top \dd=-1.$ This means that    $\uu^\top \dd_1=\uu^\top \dd_2=-1.$  This implies  $\dd_1=-\uu$ and $\dd_2=-\uu,$ a contradiction. 

Conversely, suppose  $\vert D^c \vert \leq 1.$ If  $\vert D^c \vert=0$, then the result holds by definition. If $\vert D^c \vert=1,$ then  take  $\uu=-\dd$ where $\dd$ is the vector in $D^c.$\smallskip

\noindent $(ii)$  Suppose $\cmf \leq 0.$ If $\V(f;\x)$ is empty, then  this means $D^c=\emptyset$ and the result holds. Suppose $\uu \in \V(f;\x).$ Then $$\cmf=\sup_{\dd \in D^c} \uu^\top \dd \leq 0.$$ Let $H$ be the hyperplane in $\R^{n}$  with normal vector $\uu$ passing through the origin. Then  $D^c$  is contained in the closed half space $H^-=\{\vv \in \R^n:\uu^\top \vv\leq 0\}.$

Conversely, suppose $D^c$ is contained in a closed half-space  defined by a hyperplane $H$. Let $\uu$  be a unit normal vector to the hyperplane $H$  such that  $$\sup_{\dd \in D^c} \uu^\top \dd \leq 0.$$ Therefore, we must have $\cmf \leq \sup_{\dd \in D^c} \uu^\top \dd \leq 0.$ \smallskip

\noindent $(iii)$  The result follows immediately from  \cite[Corollary 29]{audet2024cosine}.\smallskip

\noindent $(iv)$ Suppose $\cmf=1$ and $\cl D^c  \neq \Sn.$ This means there exists a unit vector $\vv$ such that $\sup_{\dd \in D^c} \vv^\top \dd=\max_{\dd \in \cl D^c} \vv^\top \dd<1$, a contradiction. 

Conversely, suppose  $\cl D^c =\Sn.$ Let $\uu \in \V(f;\x).$  Then $\uu \in \cl D^c .$   Hence, 
$$\cmf=\sup_{\dd \in D^c} \uu^\top \dd=\max_{\dd \in \cl D^c} \uu^\top \dd=\uu^\top \uu=1.$$ \qed
\end{proof}
 
When $f$ is  a single-variable function there are only three possibilities.
\begin{enumerate}[(i)]
\item The set $D^c$ is empty. In this case  $\cmf=-1.$
\item The set $D^c$ contains one direction ($D^c =\{1\}$ or $D^c =\{-1\}$). In this case $\cmf=-1.$
\item The set $D^c$ contains two directions ($D^c = \{-1, 1\}$). In this case $\cmf=1.$
\end{enumerate}
Theorem \ref{thm:maintheorem} can be reformulated as follows for a single-variable function:
\begin{enumerate}[(i)]
\item $\cmf=-1$ if and only if $ \cl D^c=D^c  \neq \mathbb{S}^1,$
\item $\cmf=1$ if and only if  $\cl D^c =D^c=\mathbb{S}^1.$
\end{enumerate}

The next theorem provides a result about the value of the cosine measure of a function at a differentiable point where $\nabla f(\x)\neq \zero_n, n \geq 2.$
\begin{theorem}[Cosine measure at a differentiable point where $\nabla f(\x) \neq \zero$] \label{thm:cmdiffpoint}
Let $f:\dom f \subseteq \R^n \to \R$   with $n\geq 2.$ Suppose $f$  is differentiable at $\x \in \inte \dom f.$ If $\nabla f(\x)\neq \zero,$ then $\cmf=0$ and $\V(f;\x)=\left \{ -\frac{\nabla f (\x)}{\Vert \nabla f(\x)\Vert}\right \}.$  
\end{theorem}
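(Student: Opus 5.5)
Let $\mathbf{g}=\nabla f(\x)\neq\zero$ and $\uu^*=-\mathbf{g}/\Vert \mathbf{g}\Vert$. The plan is to pin down $D^c$ using the first-order expansion $f(\x+t\dd)=f(\x)+t\,\mathbf{g}^\top\dd+o(t)$, valid since $\x\in\inte\dom f$. Any unit $\dd$ with $\mathbf{g}^\top\dd<0$ is a descent direction, and any unit $\dd$ with $\mathbf{g}^\top\dd>0$ is an ascent direction, hence lies in $D^c$. Consequently
$$\{\dd\in\Sn:\mathbf{g}^\top\dd>0\}\subseteq D^c\subseteq\{\dd\in\Sn:\mathbf{g}^\top\dd\ge 0\},$$
where the directions with $\mathbf{g}^\top\dd=0$ may fall on either side. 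Because $n\ge2$, the open hemisphere on the left is dense in the closed hemisphere on the right, so $\cl D^c=H:=\{\dd\in\Sn:(\uu^*)^\top\dd\le 0\}$. By the remark after Definition \ref{def:cmfunction}, we then have $\cmf=\CM(H)$ and $\V(f;\x)$ equals the argmin set for $H$.

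Next I compute the min–max over $H$. For $\uu=\uu^*$, every $\dd\in H$ gives $\uu^\top\dd\le 0$, and any unit $\dd\perp\uu^*$ (which exists since $n\ge2$) attains $0$. So $\max_{\dd\in H}(\uu^*)^\top\dd=0$, which gives $\cmf\le 0$. This also follows from Theorem \ref{thm:maintheorem}(ii).

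Now take any unit $\uu\neq\uu^*$; I need $\max_{\dd\in H}\uu^\top\dd>0$. There are two cases.
\begin{itemize}
\item If $(\uu^*)^\top\uu<0$, then $\uu\in H$ itself and gives the value $1$.
\item Otherwise, let $\mathbf{w}=\uu-((\uu^*)^\top\uu)\,\uu^*$ be the projection of $\uu$ onto $(\uu^*)^\perp$. Since $\uu\neq\uu^*$ and $\uu$ is a unit vector, $\uu$ is not a nonnegative multiple of $\uu^*$, so $\mathbf{w}\neq\zero$. Then $\dd=\mathbf{w}/\Vert\mathbf{w}\Vert\in H$ and $\uu^\top\dd=\Vert\mathbf{w}\Vert>0$.
\end{itemize}
In both cases the value is strictly positive, so the minimum $0$ is attained only at $\uu^*$. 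This proves $\cmf=0$ and $\V(f;\x)=\{\uu^*\}$.

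The main obstacle is the first step. The boundary directions with $\mathbf{g}^\top\dd=0$ have undetermined status, so I must argue through $\cl D^c$ rather than $D^c$. The key point is that the sup over $D^c$ equals the max over its closure, which requires the density claim. That claim is where $n\ge2$ is used: each $\dd$ with $\mathbf{g}^\top\dd=0$ is the limit of the normalized vectors $\dd+\epsilon\,\mathbf{g}/\Vert\mathbf{g}\Vert$ as $\epsilon\downarrow0$, and these lie in the open hemisphere.
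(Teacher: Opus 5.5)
Your proof is correct, but it follows a more direct route than the paper's.

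The paper argues structurally. It observes that the unit vectors of the hyperplane $\{\vv:\nabla f(\x)^\top\vv=0\}$ lie in $\cl D^c$, so $\cl D^c$ contains a positive spanning set of an $(n-1)$-dimensional subspace. Since $\pspan(\cl D^c)\neq\R^n$, it then invokes Theorem~\ref{thm:maintheorem}(iii), which rests on Corollary 29 of the cited cosine-measure paper, to conclude $\cmf=0$. Uniqueness of the cosine vector is simply asserted, from the claim that only one unit vector is nonpositive on all of $\cl D^c$.

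You instead use the first-order expansion to identify $\cl D^c$ exactly as the closed hemisphere $\{\dd\in\Sn:\nabla f(\x)^\top\dd\ge 0\}$. You then compute the min--max by hand, showing that every unit $\uu\neq\uu^*$ gives a strictly positive maximum.

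What your version gains is self-containment, together with rigor on several points the paper leaves implicit:
\begin{enumerate}[(a)]
\item $\cl D^c$ lies in the closed half-space $\{\nabla f(\x)^\top\dd\ge 0\}$. This is what gives $\pspan(\cl D^c)\neq\R^n$.
\item $\cl D^c$ contains the whole open hemisphere of ascent directions. This is what makes the polar of $\cl D^c$ a single ray, and hence the cosine vector unique.
\item The hyperplane directions, whose descent status is undetermined, still belong to $\cl D^c$ as limits of ascent directions.
\end{enumerate}
The paper's version is shorter because it outsources the characterization of $\CM=0$ to an external result.

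One small correction. The density of the open hemisphere in the closed one does not actually need $n\ge 2$. For $n=1$ the boundary set $\{\dd\in\mathbb{S}^1:\nabla f(\x)^\top\dd=0\}$ is empty, so the claim holds vacuously. The hypothesis $n\ge 2$ is genuinely used only where you choose a unit $\dd\perp\uu^*$. That choice is what makes the maximum at $\uu^*$ equal $0$ rather than $-1$.
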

\begin{proof}
Suppose $\nabla f(\x) \neq \zero.$ Consider the hyperplane passing through the origin and  defined by $H=\{\vv \in \R^n:-\nabla f(\x)^\top \vv=0\}.$ Then the unit vectors in $H$ with initial point at the origin  are  in $\cl D^c.$ This means that $\cl D^c$ contains a  positive spanning set of  a linear subspace with dimension $n-1$. Since $\pspan(\cl D^c)$ is not equal to $\R^n$, $\cmf=0$ by Theorem \ref{thm:maintheorem}.  Since the unit vector  $\uu \in \R^n$ such that $\uu^\top \dd \leq 0$  for all $\dd \in \cl D^c$ is unique, we  get $\V(f;\x)= \left \{-\frac{\nabla f (\x)}{\Vert \nabla f(\x)\Vert} \right \}.$   \qed
\end{proof}

Note that the previous theorem assumes $n \geq 2.$  For a single-variable function,  $\cmf=-1$ whenever $f'(x) \neq 0.$ 

The converse of Theorem \ref{thm:cmdiffpoint} is not necessarily true. Examples where $\cmf=0$ and $\nabla f(\x)=\zero$ can be created. The following section provides  examples  for the case where the gradient is equal to  the zero vector. We will see that anything can happen when $\nabla f(\x) = \zero$ (or when  $f$ is not differentiable at $\x$). 

\subsection{Cosine measure of a function when the gradient is equal to the zero vector} 
\label{sec:exampleGradEqualZero}
In this section, we provide examples in $\R^2$ showing that the cosine measure of a function may take any value between -1 and 1 inclusively. The value of the cosine measure of a function is calculated in each example.   Note that the set of non-descent directions  of a function at a point $\x,$ $D^c(f;\x)$, may not be a finite set of directions.  In \cite{audet2024cosine}, a deterministic algorithm is provided to compute the cosine measure (relative to a subspace) of any non-empty finite set of nonzero vectors. Hence,  Algorithm 1 from \cite{audet2024cosine} can be applied directly to compute the cosine measure of a function  when $D^c$ is finite. In Section \ref{sec:algo}, we show how to compute the cosine measure  of a real-valued function in $\R^n.$  Alternately, by switching to polar coordinates, computing the cosine measure in $\R^2$ can be reduced to single-variable optimization.  In all of the following examples this computation is easy, so omitted.

We begin with an example where $\nabla f(\xo)=\zero$ and $\cm{f;\xo}=-1$. 

\begin{example}[$\cm{f;\xo}=-1$] Consider the function $f:\dom f=\R^2 \to \R:\x=\bbm x_1&x_2\ebm^\top\mapsto -x_1^2-x_2^2$ at the point $\xo=\zero.$ Note that $D^c$ is empty. Therefore, $\CM(f;\xo)=-1.$
\end{example}

The next example illustrates $\nabla f(\xo)=\zero$ with $\cm{f;\xo} \in (-1,0)$. 

\begin{example}[$\cm{f;\xo} \in (-1,0)$]
Consider the class of  functions $$f_{\alpha}:\dom f=\R^2 \to \R:\x \mapsto \begin{cases}  -\alpha x_1^2-x_2^2, & x_2 \leq 0\\ -\alpha x_1^2+x_2^2, & \text{otherwise,}\end{cases}$$ where $\alpha \in (0,\infty)$, at the point $\xo=\zero.$ 

\begin{figure}[H]
\centering
  \includegraphics[width=.95\linewidth]{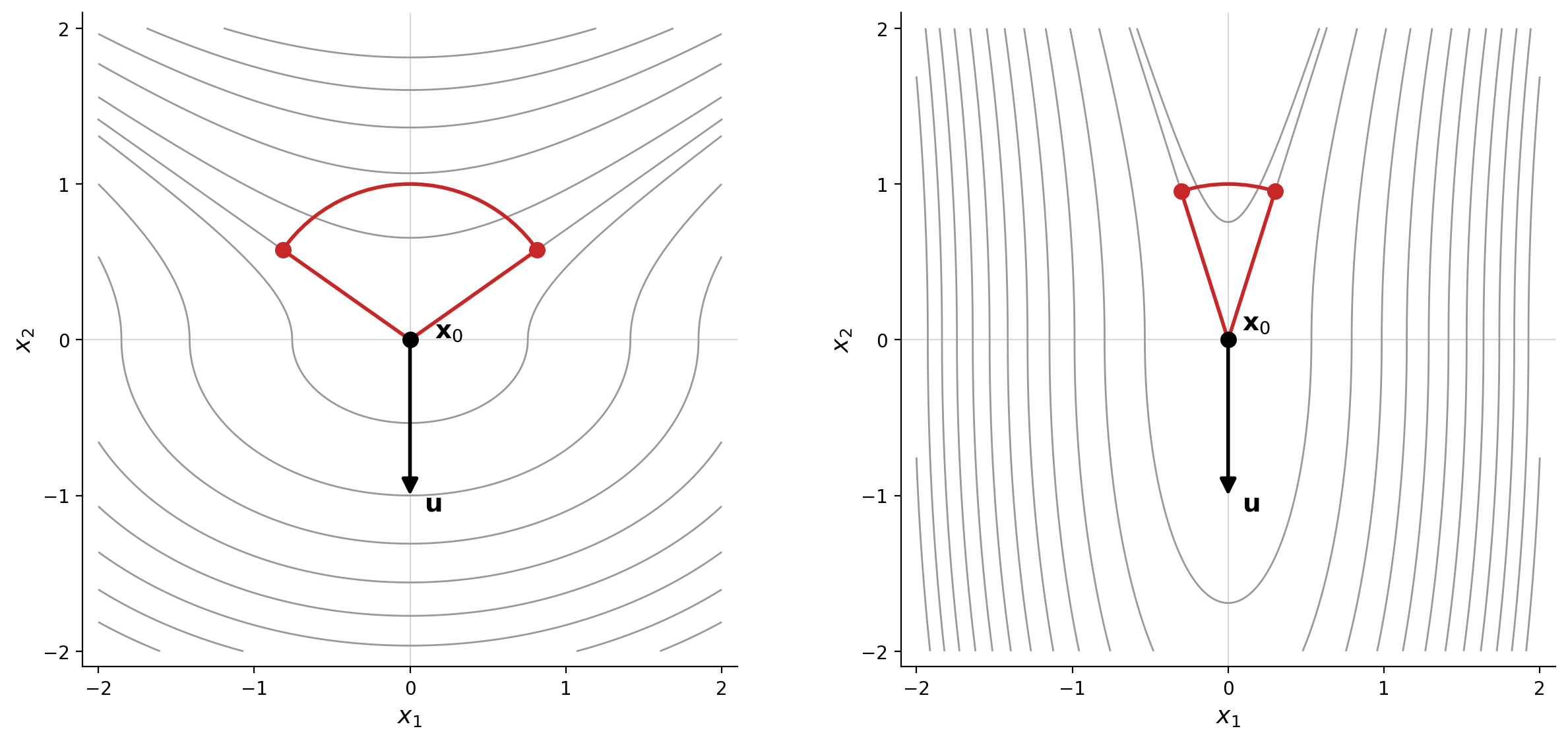}
  \caption{The set $D^c$ for $f_\alpha$  at $\xo=\zero$ is represented by the red spherical cap. On the left, $\alpha=0.5,$ and on the right $\alpha=10$.}
  \label{fig:test1}
\end{figure}

Note that $f_\alpha$ is differentiable at $\xo=\zero.$ The contour lines at $f(\x)=0$ are given by the equations $x_2=\sqrt{\alpha} \vert x_1 \vert.$ As a vector-valued function, the contour lines at $f(\x)=0$ are $L_1(t)=\bbm \frac{1}{\sqrt{1+\alpha}}&\sqrt{\frac{\alpha}{1+\alpha}}\ebm^\top t$ and $L_2(t)=\bbm \frac{-1}{\sqrt{1+\alpha}}&\sqrt{\frac{\alpha}{1+\alpha}}\ebm^\top t,$ where $t \geq 0.$  \\
 We find that the set $D^c$ is  $\npspan \left \{\bbm \frac{1}{\sqrt{1+\alpha}}&\sqrt{\frac{\alpha}{1+\alpha}}\ebm^\top, \bbm \frac{-1}{\sqrt{1+\alpha}}&\sqrt{\frac{\alpha}{1+\alpha}}\ebm^\top \right \}.$   We obtain
\begin{align*}
    \cm{f_{\alpha};\zero}=-\sqrt{\frac{\alpha}{1+\alpha}}, \quad \text{and the cosine vector is} \, \uu=\bbm 0&-1\ebm^\top.
\end{align*}
As a function of $\alpha,$ $\cm{f_\alpha;\zero}$ is a continuous function on $(0, \infty)$ with $$\lim_{\alpha \to 0^+} -\sqrt{\frac{\alpha}{1+\alpha}}=0 \quad \text{and} \quad \lim_{\alpha \to \infty} -\sqrt{\frac{\alpha}{1+\alpha}}=-1.$$
Hence, $\cm{f_\alpha;\zero}$ can take any value in the interval $(-1,0).$

\end{example}

We next illustrate $\nabla f(\xo)=\zero$ with $\cm{f;\xo} = 0$. 

\begin{example}[$\cm{f;\xo}=0$]Consider the function $f:\dom f=\R^2 \to \R:\x=\bbm x_1&x_2\ebm^\top\mapsto -x_2^2$ at the point $\xo=\zero.$  As a vector-valued function, the contour line at $f(\x)=0$ is given by $L(t)=\bbm 1&0\ebm^\top t.$  We find that the set $D^c$ is equal to $D^c=\{ \bbm 1&0\ebm^\top, \bbm -1&0 \ebm^\top \}$ and  $\cm{f;\zero}=0$.
\end{example}

The following example provides a class of functions where $\nabla f(\xo)=\zero$ with $\cm{f;\xo} \in (0,1).$

\begin{example}[$\cm{f;\xo} \in (0,1)$] \label{ex:positiveCosineConstant}
Consider the class of functions 
$$f_\beta:\dom f=\R^2 \to \R:\x \mapsto -\beta x_1^2+ x_2^2,$$ 
where $\beta>0$, at the point $\xo=\zero.$  For any  values of $\beta>0,$ we have $\nabla f_\beta(\zero)=\zero.$
   As a vector-valued function, the contour lines at $f(\x)=0$ have equation  $L_1(t)= \begin{bmatrix} \frac{1}{\sqrt{1+\beta}}&\sqrt{\frac{\beta}{1+\beta}} \end{bmatrix}^\top t,$ and $L_2(t)=\begin{bmatrix} \frac{1}{\sqrt{1+\beta}}&-\sqrt{\frac{\beta}{1+\beta}}\end{bmatrix} t.$ Let $\dd_1=\begin{bmatrix} \frac{1}{\sqrt{1+\beta}}&\sqrt{\frac{\beta}{1+\beta}} \end{bmatrix}^\top$ and $\dd_2= \begin{bmatrix} \frac{1}{\sqrt{1+\beta}}&-\sqrt{\frac{\beta}{1+\beta}}\end{bmatrix}^\top.$
The set $D^c$ can be written as  $D^c=\npspan \{\dd_1,-\dd_2\} \cup \npspan \{-\dd_1,\dd_2\}$.  
We obtain 
$$\cm{f;\zero}=\frac{1}{\sqrt{1+\beta}}$$ and the cosine vectors are $\uu_1=\ee_1$ and $\uu_2=-\ee_1.$
As a function of $\beta,$ $\cm{f;\zero}$ is continuous on $(0,\infty).$ Additionally,
 $$\lim_{\beta \to 0} \frac{1}{\sqrt{1 +\beta}}=1 \quad \text{and} 
 \quad \lim_{\beta \to \infty} \frac{1}{\sqrt{1 +\beta}}=0.$$
This shows  we can pick  $\beta>0$  to create a function where   the cosine measure of $f$ at $\xo=\zero$ can take any value  in $(0,1).$
\end{example}

Finally, we provide an example where $\nabla f(\xo)=\zero$ and $\cm{f;\xo} =1.$

\begin{example}[$\CM(f;\xo)=1$]\label{ex:cm=one}
Consider the function $f:\dom f=\R^2 \to \R:\x \mapsto -x_1^3+x_2^2$ at  the point $\xo=\zero.$ The contour lines  at $f(\x)=0$ are given by the equations  $x_2=\pm x_1^{1.5}.$
The only   descent direction  at  $\xo=\zero$  is $\dd=\ee_1.$ Thus $D^c = \mathbb{S}^2\setminus \{\ee_1\}$ and $\cm{f;\zero}=1$.
\end{example}


In the next section, pseudo-codes are provided showing how the cosine measure of a function may be computed for functions.  The examples above are all sufficiently simple that these codes can be applied (by hand) to confirm the cosine measures.

\section{Computing the cosine measure of a function} \label{sec:algo}
In this section, we show how to compute the cosine measure of a function at a point. The pseudo-code is divided in two cases: $\cl D^c(f;\x)$ is contained in a closed half-space of $\R^n$ and $\cl D^c (f;\x)$ is not contained in a closed half-space of $\R^n$. To make notation tighter, we write $D^c$  and $D$ to represent $D^c(f;\x)$ and $D(f;\x)$, respectively.

\subsection{The case where  $\cl D^c$ is contained in a closed half-space of $\R^n$}

We begin by considering the case where $\cl D^c$ is contained in a closed half-space of $\R^n.$  In this case, the cosine measure value is non-positive.  We assume that the closure of the set  of non-descent directions can be written in the form  
\begin{align}\label{eq:unionpspanginite}
\cl D^c&=\bigcup_{i=1}^k \npspan(\ess_i)
\end{align}
where each $\ess_i$ is a finite set of nonzero vectors.  This structure allows $\cl D^c$ to contain infinite vectors and does not force the cone $\{\vv : \vv = \lambda \dd, \dd \in \cl D^c, \lambda \geq 0\}$ to be convex.  Recall the definition of the polar cone of a set $\Set.$
\begin{definition}
Let $\Set \subseteq \R^n$ be non-empty. 
The \emph{polar cone} of $\Set$ is denoted by $\polar(\Set)$ and defined by
\[
\polar(\Set) \;=\; \{\vv \in \R^n : \vv^\top \dd \le 0 \text{ for all } \dd \in \Set\}.
\]
\end{definition}

Next, we present the pseudo-code for this case, then we show that it returns the exact cosine measure and cosine vector set. 

\begin{center}
\begin{algorithm}[H] 
\caption{Computing $\CM(D^c)$ when $\cl D^c$ is contained in a closed half-space of $\R^n$ \label{alg:cmpspanS}}
Given a set of non-descent directions contained in a closed half-space of $\R^n$ with its closure written in the form 
$$\displaystyle \cl D^c=\bigcup_{i=1}^k\npspan(\ess_i),$$ 
where each $\ess_i$ is a finite set of nonzero vectors  in $\R^n$. 

\textbf{0. Special cases and initialization:}

\quad Define the normalized set $$\displaystyle \ess=\bigcup_{i=1}^k \{ \dd/\|\dd\| : \dd \in \ess_i, i=1, 2, \ldots, k\}.$$


\quad Define $q = \vert \ess \vert.$

\quad If $q=0,$ then  return $\CM(D^c)=-1$ and $\V(D^c)=\emptyset.$ 

\textbf{1. Optimize:} 

\quad Solve the second-order conic program (SOCP)
$\begin{aligned}
    \qquad \quad (1.1)&\quad (t^*, \uu^*) \in \argmin_{t,\;\uu} \{ t : \Vert \uu \Vert \leq 1, \; \uu^\top \dd\leq t \quad \forall \, \dd \in \ess\}
\end{aligned}$

\textbf{2. Compute cosine measure and the cosine vector set:}
return  

$\begin{aligned}
    \qquad \quad (2.1)&\quad \CM(D^c)=t^*.
\end{aligned}$

\quad If $\CM(D^c)<0$, then return

$\begin{aligned}
\qquad \quad (2.2.1)&\quad   \V(D^c)=\{\uu^*\}.
\end{aligned}$

\quad Else ($\CM(D^c)=0)$, 

$\begin{aligned}
\qquad \quad (2.2.2) \quad & \V(D^c)=\polar(\Set) \cap \Sn.
\end{aligned}$
\end{algorithm}
\end{center}

Before proving that Algorithm \ref{alg:cmpspanS} returns the correct output, we make a few remarks.   Note that Algorithm \ref{alg:cmpspanS} requires solving a SOCP. In MATLAB, for example, this can be done be with the function {\tt coneprog}.  Finally, we note that, by  \cite[Theorem 3.2]{Regis2016}, each $\ess_i$ can be reduced to a positive linearly independent subset of $\ess_i$ with maximal cardinality.  In \cite[Section 4]{Regis2016}, Regis refers to this as a {\em frame} of  $\pspan(\ess)$. The vectors of a frame are sometimes called the \emph{generators}. If all generators of a frame for $\pspan(\Set)$ lie in an open half-space, then $\pspan(\Set)$ is said to be \emph{pointed}.  Note that the frame of a pointed polyhedral cone is unique (up to positive scaling).  Reducing each $\ess_i$ to a frame  could reduce the size of the second-order conic program in Step 1 and the (possible) cost of finding the polar cone of $\Set$, resulting in faster completion of the algorithm.

The following theorem provides the main result to prove Algorithm \ref{alg:cmpspanS} returns the correct results.

\begin{theorem}\label{thm:reducetofinite}
Let $\tee=\bigcup_{i=1}^k \npspan(\ess_i)$, where each $\ess_i$ is a finite set 
of nonzero vectors in $\R^n$, and let $\ess=\bigcup_{i=1}^k \ess_i$. If $\tee$ 
is contained in a closed half-space of $\R^n$, then
$$\CM(\tee)=\CM(\ess) \qquad \text{and} \qquad \V(\tee)=\V(\ess).$$
\end{theorem}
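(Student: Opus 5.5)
The plan is to compare, for each fixed unit vector $\uu$, the two support-type functions
$$g_{\tee}(\uu)=\sup_{\dd\in\tee}\uu^\top\dd, \qquad g_{\ess}(\uu)=\max_{\bfs\in\bar\ess}\uu^\top\bfs,$$
where $\bar\ess=\{\bfs/\|\bfs\|:\bfs\in\ess\}$. Since Definition \ref{def:cosinemeasure} applies to unit vectors, $\CM(\ess)$ is read as $\CM(\bar\ess)$, exactly as in Step 0 of Algorithm \ref{alg:cmpspanS}. If $\ess=\emptyset$ then $\tee=\emptyset$ and both sides are $-1$ and $\emptyset$ by convention, so we assume $\ess\neq\emptyset$. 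Each normalized generator lies in $\npspan(\ess_i)\subseteq\tee$, so $\bar\ess\subseteq\tee$ and therefore $g_{\tee}\ge g_{\ess}$ pointwise.

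\textbf{Key step (partial reverse inequality).} We show that $g_{\tee}(\uu)=g_{\ess}(\uu)$ whenever $g_{\ess}(\uu)\le 0$. Let $m=g_{\ess}(\uu)\le 0$ and take $\dd\in\npspan(\ess_i)$. Write $\dd=\sum_j\lambda_j\bfs_j/\|\sum_j\lambda_j\bfs_j\|$ with $\bfs_j\in\bar\ess$ and $\lambda_j\ge0$, not all zero. Then
$$\uu^\top\dd=\frac{\sum_j\lambda_j\uu^\top\bfs_j}{\|\sum_j\lambda_j\bfs_j\|}\le m\,\frac{\sum_j\lambda_j}{\|\sum_j\lambda_j\bfs_j\|}\le m .$$
The last inequality uses two facts: the triangle inequality gives $\|\sum_j\lambda_j\bfs_j\|\le\sum_j\lambda_j$, so the ratio is at least $1$; and $m\le0$, so multiplying $m$ by a number at least $1$ cannot increase it. Taking the supremum over $\dd$ gives $g_{\tee}(\uu)\le m$, and combined with $g_{\tee}\ge g_{\ess}$ this gives equality. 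This sign-sensitive estimate is the main obstacle. When $m>0$ the argument fails, and normalization can genuinely push $\uu^\top\dd$ above $m$. This is why the half-space hypothesis is needed.

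\textbf{Equality of cosine measures.} Because $\tee$ lies in a closed half-space through the origin, there is a unit $\uu_0$ with $g_{\tee}(\uu_0)\le0$. Hence $g_{\ess}(\uu_0)\le0$, so $c:=\CM(\bar\ess)=\min g_{\ess}\le 0$. Let $\uu^*$ attain this minimum; the minimum exists since $g_{\ess}$ is a continuous max of finitely many linear functions on the compact sphere. Since $g_{\ess}(\uu^*)=c\le0$, the key step gives $g_{\tee}(\uu^*)=c$. Therefore
$$\CM(\tee)\le g_{\tee}(\uu^*)=c=\min g_{\ess}\le \inf g_{\tee},$$
so $\CM(\tee)=c$ and $\uu^*$ attains the minimum of $g_{\tee}$.

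\textbf{Equality of cosine vector sets.} If $\uu\in\V(\tee)$, then $g_{\ess}(\uu)\le g_{\tee}(\uu)=c=\min g_{\ess}$, so $\uu\in\V(\ess)$. Conversely, if $\uu\in\V(\ess)$, then $g_{\ess}(\uu)=c\le0$, so the key step gives $g_{\tee}(\uu)=c$ and hence $\uu\in\V(\tee)$.
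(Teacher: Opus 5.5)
Your proposal is correct and follows essentially the same route as the paper. Both arguments rest on the same key claim: $\sup_{\dd\in\tee}\uu^\top\dd$ equals the maximum over the normalized generators whenever that maximum is nonpositive, which follows from $\sum_j\lambda_j\ge 1$ (triangle inequality) together with the sign of $m$. The equalities of the cosine measures and of the cosine vector sets then follow by the same two-sided argument. Your extra care does not change the argument: you normalize $\ess$ explicitly, treat the empty case, check that the minimum for $\tee$ is attained, and read the half-space as passing through the origin, which the statement needs.
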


\begin{proof}
Since $\tee$ is contained in a closed half-space, $\ess$ is contained in a closed half-space. This means that $\CM(\tee) \leq 0$ and $\CM(\ess) \leq 0.$ Without loss of generality, assume all vectors in $\ess$ are unit vectors.  Since 
$\ess \subseteq \tee$, we have
$\CM(\ess) \leq \CM(\tee)$.

For a 
unit vector $\uu \in \R^n$, write $f_\ess(\uu)=\max_{\dd \in \ess} \uu^\top \dd$ 
and $f_\tee(\uu)=\max_{\dd \in \tee} \uu^\top \dd$.
We claim that $f_\tee(\uu)=f_\ess(\uu)$ whenever $f_\ess(\uu) \leq 0$. Indeed, 
let $\dd \in \tee$. Say $\dd \in \npspan(\ess_i)$ with 
$\ess_i=\{\dd_1, \dd_2, \dots, \dd_q\}$. We may write  $\dd=\sum_{j=1}^q \alpha_j \dd_j$ 
with $\alpha_j \geq 0$. By the triangle inequality, we know 
$\sum_{j=1}^q \alpha_j \geq \Vert \dd \Vert = 1$. We obtain
$$\uu^\top \dd=\sum_{j=1}^q \alpha_j\, \uu^\top \dd_j 
\leq \Big(\sum_{j=1}^q \alpha_j\Big) f_\ess(\uu) \leq f_\ess(\uu),$$
where the last inequality uses $f_\ess(\uu) \leq 0$. Since $\dd \in \tee$ was arbitrary, $f_\ess(\uu)$ is an upper bound for 
$\uu^\top \dd$ over $\tee$. Hence, $f_\tee(\uu) \leq f_\ess(\uu)$. Clearly,  
$f_\ess(\uu) \leq f_\tee(\uu)$ since  $\ess \subseteq \tee.$ This proves the claim.  

For  
any $\uu_\ess \in \V(\ess)$, the previous result gives 
$\CM(\tee) \leq f_\tee(\uu_\ess)=f_\ess(\uu_\ess)=\CM(\ess) \leq 0$. Therefore 
$\CM(\tee)=\CM(\ess)$.

Finally, if $\uu \in \V(\tee)$, then 
$\CM(\ess) \leq f_\ess(\uu) \leq f_\tee(\uu)=\CM(\tee)=\CM(\ess)$, so 
$\uu \in \V(\ess)$. Conversely, if $\uu \in \V(\ess)$, then 
$f_\ess(\uu)=\CM(\ess) \leq 0$,  and we get 
$f_\tee(\uu)=\CM(\ess)=\CM(\tee).$ That is, $\uu \in \V(\tee)$. \qed
\end{proof}

Theorem \ref{thm:reducetofinite} shows that the cosine measure of the (possibly)  infinite set $\bigcup_{i=1}^k\npspan(\ess_i)$ can be found by computing the cosine measure of the finite set $\ess = \bigcup_{i=1}^k \ess_i.$  This allows the application of the techniques from \cite{audet2024cosine}.  We now provide three results on the cosine vector set.

\begin{lemma} \label{lem:oneVector}
Let $\ess$ be a  non-empty set of unit vectors in $\R^n.$  If $\CM(\ess)<0$, then  $\V(\ess)$ contains exactly one vector.
\end{lemma}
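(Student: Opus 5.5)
Non-emptiness of $\V(\ess)$ comes from compactness. Replacing $\ess$ by $\cl\ess$ does not change $\sup_{\dd\in\ess}\uu^\top\dd$, so the function
$$g(\uu)=\sup_{\dd\in\ess}\uu^\top\dd=\max_{\dd\in\cl\ess}\uu^\top\dd$$
is a supremum of linear functions. Hence $g$ is convex, finite (since $\ess$ is bounded), and Lipschitz. Its minimum over the compact unit sphere is therefore attained, so $\V(\ess)\neq\emptyset$. What remains is uniqueness.

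For uniqueness, take $\uu_1,\uu_2\in\V(\ess)$ with $\uu_1\neq\uu_2$, and write $c=\CM(\ess)<0$. If $\uu_2=-\uu_1$, then for any $\dd\in\ess$ (which is non-empty) we get $\uu_1^\top\dd\le c<0$ and $-\uu_1^\top\dd\le c<0$, which is impossible. So $\uu_1+\uu_2\neq\zero$, and we may set $\mathbf{w}=(\uu_1+\uu_2)/2$.

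By convexity, $g(\mathbf{w})\le \tfrac12 g(\uu_1)+\tfrac12 g(\uu_2)=c$. Since $\uu_1\neq\uu_2$ are unit vectors, strict convexity of the Euclidean ball gives $0<\|\mathbf{w}\|<1$. Normalize and use positive homogeneity of $g$:
$$g\!\left(\frac{\mathbf{w}}{\|\mathbf{w}\|}\right)=\frac{g(\mathbf{w})}{\|\mathbf{w}\|}\le \frac{c}{\|\mathbf{w}\|}<c,$$
where the last step uses $c<0$ and $1/\|\mathbf{w}\|>1$. This contradicts the minimality of $c$ over unit vectors, so $\V(\ess)$ contains exactly one vector.

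The main point to handle carefully is the sign: dividing by $\|\mathbf{w}\|<1$ makes the value strictly smaller only because $c<0$. This is precisely why uniqueness can fail when $c\ge 0$, as in the case $\CM=0$ of Theorem \ref{thm:cmdiffpoint} and Algorithm~\ref{alg:cmpspanS}, step (2.2.2). The antipodal case $\uu_2=-\uu_1$ has to be excluded separately, as done above, before the midpoint argument can be used.
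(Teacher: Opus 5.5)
Your proof is correct and is essentially the paper's argument. You rule out the antipodal case, then show that the normalized midpoint of two distinct cosine vectors attains a value at most $c/\|\mathbf{w}\|<c$ because $c<0$. You also justify $\V(\ess)\neq\emptyset$ by continuity and compactness, which the paper only asserts.

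One minor slip is in your closing remark, not the proof: Theorem \ref{thm:cmdiffpoint} gives $\CM=0$ with a \emph{unique} cosine vector, so it does not illustrate failure of uniqueness. Step (2.2.2) of Algorithm \ref{alg:cmpspanS} can.
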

\begin{proof}
Since $\ess$ is a non-empty set, $\V(\ess)$ contains at least one vector. We show that it contains exactly one by contradiction. Suppose $\CM(\ess)=\alpha<0$ and $\uu,\vv$ are two distinct unitary vectors contained in $\V(\ess).$ This means  $\uu^\top \dd \leq \alpha$ and $\vv^\top \dd \leq \alpha$  for all $\dd \in \ess.$ 
Consider the unit vector $\frac{\uu+\vv}{\Vert \uu+\vv \Vert}.$ Note that $\Vert \uu+\vv \Vert \neq 0$ since $\Vert \uu+\vv \Vert=0$ implies $\vv=-\uu$ and this would imply  $\vv^\top \dd=-\uu^\top \dd \geq -\alpha>0$. 
Then for any $\dd \in \ess$, we obtain 
$$\left (\frac{\uu+\vv}{\Vert \uu+\vv \Vert} \right )^\top \dd \leq \frac{2\alpha}{\Vert \uu+\vv \Vert} < \alpha,$$
as $\Vert \uu+\vv \Vert < \|\uu\|+\|\vv\| = 2$ and $\alpha <0$.  This is a contradiction to the assumption that $\CM(\ess)=\alpha.$ \qed
\end{proof}

Next, we prove that the  vector $\uu^*$ found while solving the SOCP   in Step (1.1) is a unitary vector  whenever  $\CM(D^c)<0.$

\begin{lemma} \label{lem:uIsUnitary}
Let $\ess$ be a finite set of  unit vectors. Suppose  that the convex relaxation of the cosine measure problem  returns a negative value, i.e.,
\begin{align}\label{eq:convexrelaxationneg}
\min_{\substack{\uu \in \R^n\\\Vert \uu \Vert \leq 1}} \max_{\dd \in \ess} \uu^\top \dd<0.
\end{align}
If $\uu$ is a vector solving \eqref{eq:convexrelaxationneg}, then $\Vert \uu \Vert=1.$
\end{lemma}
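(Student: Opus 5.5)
The plan is a scaling argument by contradiction. Write $g(\uu)=\max_{\dd \in \ess} \uu^\top \dd$ for $\uu$ in the closed unit ball. The two facts used are that $g$ is positively homogeneous of degree one, $g(\lambda \uu)=\lambda g(\uu)$ for $\lambda\geq 0$, and that the ball is compact and $\ess$ finite, so the minimum in \eqref{eq:convexrelaxationneg} is attained. Let $\beta<0$ denote this minimum value and let $\uu$ be any minimizer, so $g(\uu)=\beta$.

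Suppose, for contradiction, that $\Vert \uu \Vert<1$. First rule out $\uu=\zero$: then $g(\uu)=0$, contradicting $\beta<0$. So $0<\Vert \uu\Vert<1$, and we rescale to the boundary by setting $\vv=\uu/\Vert \uu \Vert$, which lies in the feasible ball. Homogeneity gives
$$g(\vv)=\frac{g(\uu)}{\Vert \uu \Vert}=\frac{\beta}{\Vert \uu\Vert}<\beta,$$
where the strict inequality holds because $\beta<0$ and $1/\Vert \uu \Vert>1$. This contradicts the minimality of $\beta$, so $\Vert \uu\Vert=1$.

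There is no real obstacle here. The only points needing care are the sign (scaling up improves the objective only because $\beta$ is negative) and excluding $\uu=\zero$ before normalizing. It is also worth remarking that the argument shows the relaxed minimum equals $\CM(\ess)$. Indeed, the relaxation is at most $\CM(\ess)$ because its feasible set contains $\Sn$, and every relaxed minimizer is a unit vector. Combined with Lemma \ref{lem:oneVector}, this justifies returning $\V(D^c)=\{\uu^*\}$ in Step (2.2.1) of Algorithm \ref{alg:cmpspanS}.
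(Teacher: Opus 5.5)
Your proposal is correct and takes essentially the same route as the paper. Both arguments rule out $\uu=\zero$, then rescale to $\uu/\Vert\uu\Vert$ and use negativity of the objective to get a strictly smaller value, contradicting minimality; the paper does this termwise via $\uu^\top\dd<0$ for each $\dd\in\ess$, and you do it through positive homogeneity of the max.
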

\begin{proof}
By way of contradiction, suppose $\uu$ solves \eqref{eq:convexrelaxationneg}. If $\Vert \uu \Vert=0,$ then  $\uu^\top \dd=0$ for all $\dd \in \ess,$ a contradiction.  

Suppose $0<\Vert \uu \Vert < 1.$ Then  $$\left ( \frac{\uu}{\Vert \uu \Vert} \right )^\top \dd<\uu^\top \dd<0$$ for all $\dd \in \ess.$ It follows that $$\max_{\dd \in \ess} \left ( \frac{\uu}{\Vert \uu \Vert}\right )^\top \dd<\max_{\dd \in \ess} \uu^\top \dd,$$ a contradiction to the assumption that $\uu$ solves \eqref{eq:convexrelaxationneg}. \qed
\end{proof}

 \noindent Next, we show that the cosine vector set of $D^c$ is equal to $\polar(D^c) \, \cap \, \Sn$  whenever $\CM(D^c)=0.$

\begin{proposition}\label{prop:cvEqualPolar}
Let $\Set$ be a finite set of unit vectors in $\R^n.$ If $\CM(\Set)=0$, then $\CV(\Set)=\polar(\Set) \, \cap \, \Sn.$
\end{proposition}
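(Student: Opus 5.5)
The plan is to prove the two inclusions $\CV(\Set)\subseteq \polar(\Set)\cap\Sn$ and $\polar(\Set)\cap\Sn\subseteq\CV(\Set)$ directly from Definition~\ref{def:cosinemeasure}. The only input needed is that, since $\Set$ is finite (and non-empty, since $\CM(\Set)=0\neq-1$), the supremum in the definition is a maximum. Consequently, for every unit $\uu$ we have $\max_{\dd\in\Set}\uu^\top\dd\geq \CM(\Set)=0$, and $\uu\in\CV(\Set)$ exactly when this maximum equals $0$.

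For the first inclusion, take $\uu\in\CV(\Set)$. Then $\Vert\uu\Vert=1$, so $\uu\in\Sn$. Moreover $\max_{\dd\in\Set}\uu^\top\dd=\CM(\Set)=0$, so $\uu^\top\dd\le 0$ for every $\dd\in\Set$. By the definition of the polar cone, $\uu\in\polar(\Set)$.

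For the reverse inclusion, take $\uu\in\polar(\Set)\cap\Sn$. Then $\uu^\top\dd\le0$ for all $\dd\in\Set$, so $\max_{\dd\in\Set}\uu^\top\dd\le 0$. Since $\uu$ is a unit vector, the minimality in the definition of $\CM(\Set)$ gives $\max_{\dd\in\Set}\uu^\top\dd\geq\CM(\Set)=0$. Combining the two bounds, the maximum equals $0=\CM(\Set)$, so $\uu$ attains the minimum and $\uu\in\CV(\Set)$.

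No step is a genuine obstacle. The one point needing care is that the minimum in Definition~\ref{def:cosinemeasure} is attained, so that $\CV(\Set)$ is well defined. This holds because $\uu\mapsto\max_{\dd\in\Set}\uu^\top\dd$ is a maximum of finitely many linear functions, hence continuous, and is minimized over the compact set $\Sn$. Finiteness of $\Set$ is not really essential: the same argument works with $\sup$ in place of $\max$.
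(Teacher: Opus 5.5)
Your proof is correct and matches the paper's argument: both inclusions come straight from the definition, using that $\max_{\dd\in\Set}\uu^\top\dd\ge\CM(\Set)=0$ for every unit vector $\uu$. Your remarks that the minimum is attained and that finiteness can be dropped (replacing $\max$ by $\sup$) are also correct.
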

\begin{proof}
Let $\uu \in\CV(\Set).$ Then $\max_{\dd \in \Set} \uu^\top \dd=0$ and so
$ \uu^\top \dd\leq0$ for all $\dd \in \Set$. Hence, $\uu \in \polar(\Set) \, \cap \, \Sn.$
Now, suppose $\uu \in \polar(\Set) \, \cap \, \Sn$. This implies
$\max_{\dd \in \Set} \uu^\top \dd\leq0.$ Since $\CM(\Set)=0$, we must have $\max_{\dd \in \Set} \uu^\top \dd=0$ (otherwise, it contradicts the definition of cosine measure).  It follows that $\uu \in \CV(\Set)$.  \qed
\end{proof}

Note that 
\begin{equation} \label{eq:polarEqs}
\polar\left (\bigcup_{i=1}^k \pspan(\Set_i)\right )=\bigcap_{i=1}^k \polar(\pspan(\Set_i))=\bigcap_{i=1}^k \polar(\Set_i)=\polar(\Set)
\end{equation}
where $\Set=\cup_{i=1}^k \Set_i.$  The polar cone is finitely generated (since $\cl(D^c)$ is assumed to have the form described in \eqref{eq:unionpspanginite}).  A frame for the polar cone  can be found  using the MATLAB toolbox MPT3 \cite{MPT32013}.  We conclude this section by showing that Algorithm \ref{alg:cmpspanS} returns the correct value of cosine measure and the complete cosine vector set.

\begin{theorem}
    Given a set of non-descent directions contained in a closed half-space of $\R^n$  written in the form $\displaystyle \cl D^c=\bigcup_{i=1}^k\npspan(\ess_i)$ where each $\ess_i$ is a finite set of nonzero vectors  in $\R^n$, Algorithm \ref{alg:cmpspanS} returns the correct cosine measure value and the complete cosine vector set.
\end{theorem}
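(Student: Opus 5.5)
The plan is to walk through Algorithm \ref{alg:cmpspanS} step by step, checking each returned output against an earlier result.

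\textbf{Step 0.} First, $\CM(D^c)=\CM(\cl D^c)$ and $\V(D^c)=\V(\cl D^c)$, because the supremum over $D^c$ equals the maximum over $\cl D^c$, as noted after Definition \ref{def:cmfunction}. So it suffices to work with $\tee=\cl D^c=\bigcup_{i=1}^k\npspan(\ess_i)$. If $q=0$, every $\ess_i$ is empty, so $\tee=\emptyset$. Then $D^c=\emptyset$, and by definition $\CM(D^c)=-1$ and $\V(D^c)=\emptyset$, which is exactly what Step 0 returns.

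\textbf{Reduction to a finite set.} When $q\ge 1$, normalizing the vectors of each $\ess_i$ does not change $\npspan(\ess_i)$. Theorem \ref{thm:reducetofinite} then gives $\CM(\tee)=\CM(\ess)$ and $\V(\tee)=\V(\ess)$, where $\ess$ is the normalized finite set built in Step 0. Since $\tee$ lies in a closed half-space, so does $\ess\subseteq\tee$. Theorem \ref{thm:maintheorem}(ii) (or directly the unit normal of that half-space) therefore gives $\CM(\ess)\le 0$.

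\textbf{Relating the SOCP to $\CM(\ess)$.} This is the main step. The SOCP in Step 1 is the convex relaxation $t^*=\min_{\|\uu\|\le1}\max_{\dd\in\ess}\uu^\top\dd$. Since the unit sphere is contained in the ball, $t^*\le\CM(\ess)\le 0$. I then split into two cases.
\begin{itemize}
\item If $t^*<0$, Lemma \ref{lem:uIsUnitary} shows that the minimizer $\uu^*$ is unitary. Hence $\CM(\ess)\le\max_{\dd\in\ess}(\uu^*)^\top\dd=t^*$, so $\CM(\ess)=t^*<0$ and $\uu^*\in\V(\ess)$. Lemma \ref{lem:oneVector} then gives $\V(\ess)=\{\uu^*\}$, which justifies (2.2.1).
\item If $t^*=0$, then $0=t^*\le\CM(\ess)\le0$, so $\CM(\ess)=0=t^*$. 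Proposition \ref{prop:cvEqualPolar} then gives $\V(\ess)=\polar(\ess)\cap\Sn$, which justifies (2.2.2).
\end{itemize}
No other case arises, because $t^*\le 0$; this matches the algorithm's "Else ($\CM(D^c)=0$)" branch. In both cases (2.1) returns the correct value.

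\textbf{Where care is needed.} The delicate point is checking that Step 1 delivers $\CM$ and not merely a lower bound. That hinges on using the lemma about unit minimizers, together with the sandwich $t^*\le\CM(\ess)\le0$, which in turn depends on the half-space hypothesis. A secondary check is that $\polar(\ess)$ computed with normalized vectors agrees with the polar of the original generators and with $\polar(\tee)$. This follows from \eqref{eq:polarEqs} and from invariance of the polar under positive scaling, so the returned set is the complete cosine vector set of $D^c$.
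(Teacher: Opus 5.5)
Your argument is correct and follows the same route as the paper: you reduce $D^c$ to the finite normalized set $\ess$ via Theorem~\ref{thm:reducetofinite}, then use Lemmas~\ref{lem:oneVector} and~\ref{lem:uIsUnitary} when $\CM(D^c)<0$, and Proposition~\ref{prop:cvEqualPolar} with \eqref{eq:polarEqs} when $\CM(D^c)=0$. The only differences are that the paper cites Lemma~33 of \cite{audet2024cosine} for exactness of the convex relaxation when the cosine measure is nonpositive, whereas you derive it yourself from the sandwich $t^*\le\CM(\ess)\le 0$ together with Lemma~\ref{lem:uIsUnitary}; you also spell out the $q=0$ case and the passage from $D^c$ to $\cl D^c$ to $\ess$, which the paper leaves implicit.
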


\begin{proof}
Lemma 33 of \cite{audet2024cosine} notes that if the cosine measure is nonpositive, then the convex relaxation of the cosine measure problem is exact, i.e., if $\CM(\ess) \leq 0$, then 
    \begin{equation}\label{eq:convexrelaxexact}
        \CM(\ess) = \min_{\substack{\uu \, \in \, \R^n \\\Vert \uu \Vert \leq 1}}\max_{\dd \in \ess} \uu^\top \dd.
    \end{equation}
Step 1 is a standard reformulation of Problem \ref{eq:convexrelaxexact}. Step (2.1) extracts $\CM(D^c),$ given by $t^*$. 

If $\CM(D^c)=0, $  Step (2.2.2) returns the full cosine vector set of $D^c$ by Proposition \ref{prop:cvEqualPolar} and \eqref{eq:polarEqs}.\\ If $\CM(D^c)<0$,  Step (2.2.1) returns the unique cosine vector of $D^c$ by Lemmas \ref{lem:oneVector} and \ref{lem:uIsUnitary}.  \qed
\end{proof}

\subsection{The case where $\cl D^c$ is not contained in a closed half-space of $\R^n$}

Next, we consider the case where $\cl D^c$ is not contained in a closed half-space of $\R^n.$ In this case, the cosine measure is strictly positive.   To find the value of cosine measure and the cosine vector set, we will work from the \emph{spherical complement} of $\cl D^c$, $\inttD =\{\x \in \Sn : \x \notin \cl D^c\}$.  In this case, we assume that the spherical complement of the closure of the non-descent directions can be written in the form  
  \begin{align} \label{eq:formDescentSet}
 \inttD &= \Sn \cap \bigcup_{i=1}^k \intt \pspan (\ess_i), 
 \end{align} 
where $\pspan (\ess_i)\cap \pspan (\ess_j) = \{\zero\}$ for all $i \neq j$.  Intuitively, $\inttD$ is providing a proxy for the interior of $D$ relative to the unit sphere (it is not the interior, nor is it the relative iterator, as both of those are empty).  Notice that $\cl D^c$ not contained in a closed half-space does not imply that $\inttD$ is contained in a closed half-space.  Indeed, the function and point given in Example \ref{ex:threewilldo} has $\cl D^c = \{ \x \in \R^2 : |x_1| \geq |x_2|, x_1^2+x_2^2=1 \}$ and $\inttD = \{ \x \in \R^2 : |x_1| < |x_2|, x_1^2+x_2^2=1 \}$, neither of which are contained in a closed half-space.

If $\cl D^c$ is the entire unit sphere, then $\CM(D^c) = 1$ by Theorem \ref{thm:maintheorem}.  This creates a special case in Algorithm \ref{alg:notContainedSOCP} that we elaborate  in the next lemma.

\begin{lemma}\label{lem:cmfxEqualOne}
Let $f:\dom f \subseteq \R^n \to \R$  and  $\x \in \intt \dom f.$ Then the following are equivalent:
\begin{enumerate}[(i)]
\item $\cl D^c=\Sn$
\item $\inttD=\emptyset,$
\item $\CM(D^c)=1,$ 
\item $\CV(D^c)=\Sn.$ 
\end{enumerate}
\end{lemma}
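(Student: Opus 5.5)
We prove the cycle (i)$\Leftrightarrow$(ii), (i)$\Leftrightarrow$(iii), and (iii)$\Leftrightarrow$(iv). Everything is elementary once we recall that $\inttD$ is by definition the spherical complement $\Sn\setminus \cl D^c$, and that $\CM(D^c)=\cmf$ whenever $D^c\neq\emptyset$.

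\textbf{(i)$\Leftrightarrow$(ii).} Since $\cl D^c\subseteq\Sn$ always, $\inttD=\Sn\setminus\cl D^c$ is empty exactly when $\cl D^c=\Sn$. No representation of the form \eqref{eq:formDescentSet} is needed here.

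\textbf{(i)$\Leftrightarrow$(iii).} This is Theorem \ref{thm:maintheorem}(iv). There is one bookkeeping point: if $D^c=\emptyset$, the cosine measure is $-1\neq 1$ and $\cl D^c=\emptyset\neq\Sn$, so both statements fail together. Otherwise $\CM(D^c)=\cmf$ and the theorem applies directly.

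\textbf{(i)$\Rightarrow$(iv).} Assume $\cl D^c=\Sn$. For any $\uu\in\Sn$,
$$\sup_{\dd\in D^c}\uu^\top\dd=\max_{\dd\in\cl D^c}\uu^\top\dd=\uu^\top\uu=1.$$
So every unit vector attains the minimum value $1=\CM(D^c)$, and therefore $\CV(D^c)=\Sn$.

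\textbf{(iv)$\Rightarrow$(iii).} This is the only step needing an idea. Assume $\CV(D^c)=\Sn$. Then $D^c\neq\emptyset$, since otherwise $\CV(D^c)=\emptyset$ by definition. Define $g(\uu)=\max_{\dd\in\cl D^c}\uu^\top\dd$; it is constant on $\Sn$, equal to $c=\CM(D^c)$.

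Pick any $\dd_0\in\cl D^c$ and take $\uu=\dd_0$. This gives $c=g(\dd_0)\ge \dd_0^\top\dd_0=1$. Since cosine measures never exceed $1$, we get $c=1$, which is (iii).

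\textbf{Expected obstacle.} The only possible difficulty is this last implication: it needs the observation that constancy of $g$ on the sphere forces $g\equiv 1$, obtained by evaluating $g$ at a point of $\cl D^c$ itself. The remaining care is the edge case $D^c=\emptyset$, handled above. The hypothesis $\x\in\intt\dom f$ does not seem to be needed for any of the equivalences.
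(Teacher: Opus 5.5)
Your proof is correct, but it takes a different route from the paper. The paper's entire proof is a one-line citation of an external result (Proposition~2 of \cite{audet2024cosine}) on sets of unit vectors whose cosine measure equals $1$.

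You instead make the lemma self-contained:
\begin{enumerate}[(a)]
\item (i)$\Leftrightarrow$(ii) is read off from the definition of $\inttD$ as the spherical complement of $\cl D^c$.
\item (i)$\Leftrightarrow$(iii) is reduced to Theorem~\ref{thm:maintheorem}(iv). You check the case $D^c=\emptyset$ separately, where both measures equal $-1$ by convention.
\item (i)$\Rightarrow$(iv) is a direct computation.
\item (iv)$\Rightarrow$(iii) evaluates the constant function $\uu\mapsto\max_{\dd\in\cl D^c}\uu^\top\dd$ at a point $\dd_0\in\cl D^c$. This is the mirror image of the step in the paper's proof of Theorem~\ref{thm:maintheorem}(iv), where a cosine vector is observed to lie in $\cl D^c$.
\end{enumerate}

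The citation buys brevity. Your version buys transparency: it shows the lemma is really a statement about an arbitrary subset of $\Sn$. As you correctly note, neither the hypothesis $\x\in\intt\dom f$ nor the polyhedral representation \eqref{eq:formDescentSet} of $\inttD$ plays any role.

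Your only remaining dependency is Theorem~\ref{thm:maintheorem}(iv). Its forward direction tacitly uses compactness of $\cl D^c$ and the equality case of Cauchy--Schwarz to get $\max_{\dd\in\cl D^c}\vv^\top\dd<1$ for $\vv\in\Sn\setminus\cl D^c$. You could add that one line to make your argument fully self-contained.
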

\begin{proof}
 The result follows immediately from \cite[Prop. 2]{audet2024cosine}. \qed
\end{proof}

Algorithm \ref{alg:notContainedSOCP} proposes a  pseudo-code to compute the cosine measure and the cosine vector set for a function when $\cl D^c$ is not contained in a closed half-space of $\R^n$.

\medskip

\begin{algorithm}[H]
\caption{Computing $\CM(D^c)$ when $D^c$ is not contained in a closed half-space
\label{alg:notContainedSOCP}}
Given a set of non-descent directions not contained in a closed half-space of $\R^n$ with its spherical complement written in the form
$$\inttD=\Sn \cap \bigcup_{i=1}^k \intt \pspan (\ess_i),$$
where each $\ess_i$ is a finite set of nonzero vectors in $\R^n$ and $\pspan (\ess_i) \cap \pspan(\ess_j)=\{\zero\}$ for all $i \neq j$.

\medskip
\textbf{0. Special cases and initialization:}

\quad If $\inttD=\emptyset$, return $\CM(D^c)=1$,
$\V(D^c)=\Sn$.


\quad Else define the index set of nontrivial $\ess_i$, $J =\{i : \intt \pspan(\ess_i)\neq\emptyset\} \subseteq \{1, 2, \ldots, k\}$.

\medskip
\textbf{1. Outer loop:}
For each $i \in J$ do the following.

\quad Compute the set of positive unit normals to the $(n{-}1)$-dim. faces of $\pspan(\ess_i)$ 

$\begin{aligned}
    \qquad \quad (1.1)&\quad  \mathcal{P}_i \;=\;
    \left\{\, \pee \;:\;
    \begin{array}{l}
    \|\pee\|=1, \pee^\top  \dd  \ge \zero \text{ for all } \dd \in \ess_i, \\
    \dim(\spann(\dd : \pee^\top \dd = 0, \dd \in \ess_i)) = n-1 \\
    \end{array}
    \right\}.
\end{aligned}$

\quad Solve the second-order conic program (SOCP)

$\begin{aligned}
    \qquad \quad (1.2)&\quad (t_i^*,\uu_i^*) \in \argmax_{t,\;\uu} \{t: \|\uu\| \;\le\; 1, \; \pee^\top\uu \;\ge\; t
    \quad \forall\,\pee\in\mathcal{P}_i \}
\end{aligned}$

\medskip
\textbf{2. Compute the cosine measure and cosine vector set:} return
$\begin{aligned}
     \qquad \quad(2.1) &\quad\CM(D^c)
    \;=\;
    \min_{i \in J}
    \sqrt{1 - (t_i^*)^{2}},
    \\[4pt]
     \qquad \quad(2.2) &\quad\V(D^c)
    \;=\;
    \bigl\{\,
        \uu_i^*
        \;:\;
        t_i^*
        = \max_{j \in J} t_j^*,
        ~~ i \in J
    \,\bigr\}.
\end{aligned}$
\end{algorithm}

\medskip

In words, Algorithm \ref{alg:notContainedSOCP} considers the pointed polyhedral cones $C_i=\pspan(\Set_i).$ For each $C_i$, it finds the unit vector $\uu_i^*$ which is as far as possible  from the $(n-1)$ dimensional faces of $C_i.$ To do that, the algorithm finds the positive normal vectors for all faces of $C_i$.  Working with the set of positive normals $\Pe_i$ associated to $C_i$, the problem requires to find the unit vector $\uu_i^*$ which is a close as possible (in angle) to the farthest positive normal vectors in $\Pe_i.$ Once it is done for all pointed polyhedral cone $C_i$,  the cosine measure is obtained by taking the minimum value of $\sin(\varphi_i)$ where $0<\varphi_i<\pi/2$ is the angle between $\uu_i^*$ and an active vector $\pe_i \in \Pe_i.$   

Before proving that Algorithm \ref{alg:notContainedSOCP} returns the correct outputs, we make a few more remarks.

First, the assumption that $\pspan (\ess_i) \, \cap \, \pspan(\ess_j)=\{\zero\}$  is necessary  to obtain the exact value of cosine measure.  For example, suppose  
$$\inttD = \{ (x_1, x_2) : 0<x_1, -x_1 < x_2, \|\x\|=1\}.$$
The desired description would use $\{(\ee_1-\ee_2)/\sqrt{2}, \ee_2\}$, while an undesirable description would use $\{(\ee_1-\ee_2)/\sqrt{2}, (\ee_1+\ee_2)/\sqrt{2}\}$ and  $\{\ee_1, \ee_2\}$  (see Figure \ref{fig:OverlappingCones}).  The correct representation gives $$\CM(D^c)=\cos(3\pi/8) \approx 0.3827.$$  However, the ``overlapping'' description gives $\CM(D^c)=\cos(\pi/4)\approx 0.7071.$  If $\pspan (\ess_i) \, \cap \, \pspan(\ess_j) \neq \{\zero\}$ for some  $i \neq j,$ then the value of cosine measure may be overestimated.

\begin{figure}[H]
    \centering
    \includegraphics[width=\linewidth]{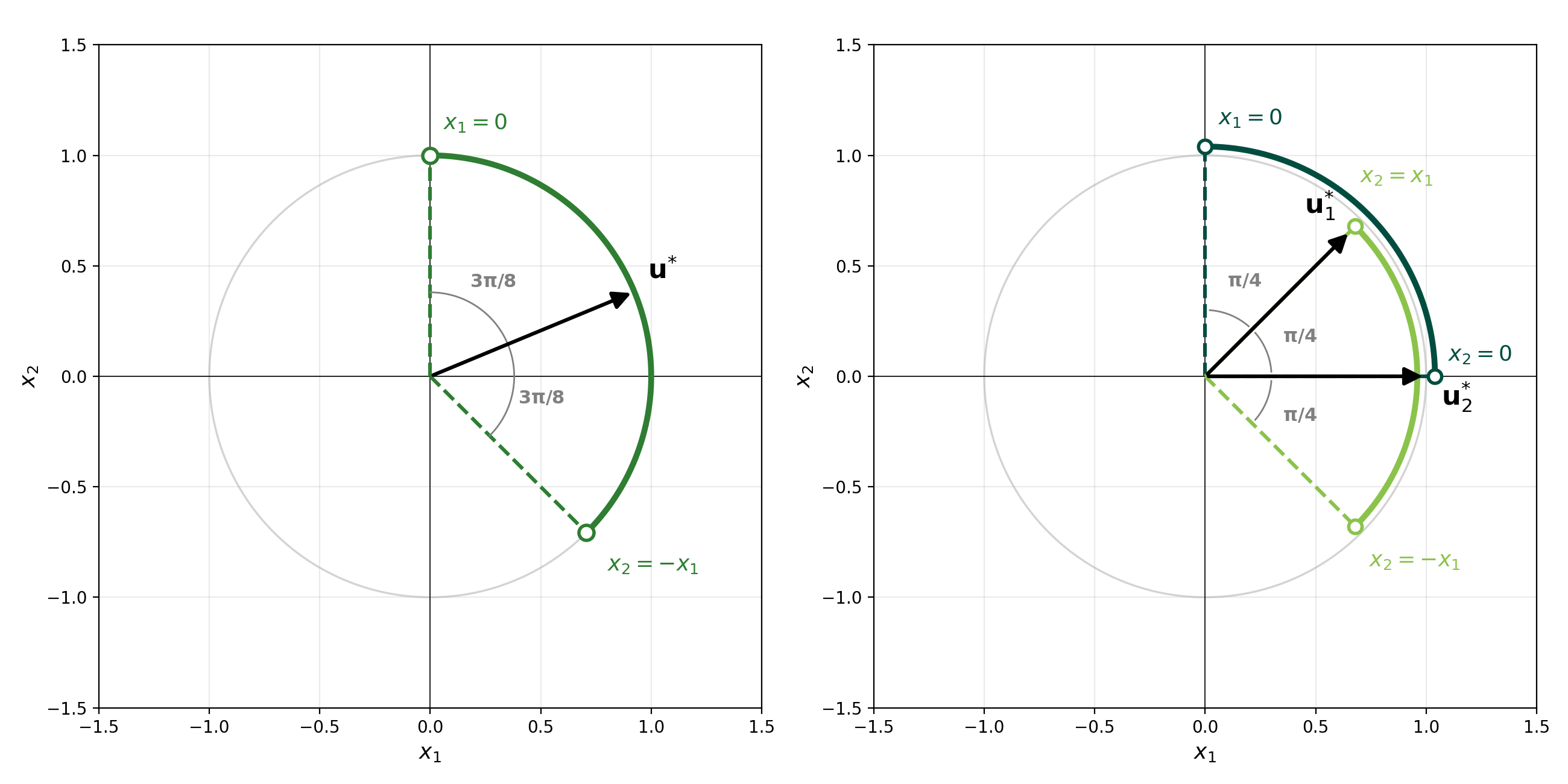}
    \caption{On the left,  the set $\{ (x_1, x_2) : 0<x_1, -x_1 < x_2, \|\x\|=1\}$ is represented by one spherical cap. The cosine vector $\uu^*$ is found using Algorithm \ref{alg:notContainedSOCP}.  On the right, the set is decomposed into two overlapping spherical caps.  A vector $\uu^*$  is computed for each spherical cap. This yields an incorrect (overestimated) value of the cosine measure.}
    \label{fig:OverlappingCones}
\end{figure}

Examining the special cases (Step 0), we notice that if Algorithm \ref{alg:notContainedSOCP} is applied to a single variable function (i.e., $n=1$), then ``$\cl D^c$ is not in a closed half-space'' implies that $\cl D^c = \mathbb{S}^1$, so the special case  $\inttD=\emptyset$ is applied.   Also note, in Step 0, that if $J =\emptyset$, then the special case $\inttD=\emptyset$ would have been triggered. Therefore, if the algorithm continues to Step 1, then we must have $n \geq 2$ and $J \neq \emptyset$.

Like Algorithm \ref{alg:cmpspanS}, if desired, each $\ess_i$ could be reduced to its unique frame at the beginning of Step 1.  This can simplify the process to identify all boundary faces of a pointed polyhedral cone. 

In Step 1, the algorithm defines $\mathcal{P}_i$ to be the set of positive unit normal vectors to the $n-1$ dimensional faces of $C_i=\pspan(\ess_i)$.  To justify this, first notice that $\dim \pspan(\ess_i) = n$, as otherwise $\intt \pspan(\ess_i) = \emptyset$ and the set would have been ignored in Step 1.  As $\dim \pspan(\ess_i) = n$, the set $\mathcal{P}_i$ can be created by taking subsets of $\ess_i$ of size $n-1$ using the (generalized) cross product to create $\tilde{\pee}$ then checking if either $\pee= \tilde{\pee}/\|\tilde{\pee}\|$ or $\pee=- \tilde{\pee}/\|\tilde{\pee}\|$ satisfies $\pee^\top \dd \geq \zero$. It is possible to identify the same face of $\pspan(\ess_i)$ more than once by considering sets of $n-1$ linearly independent vectors contained in $\ess_i$. For example, consider the set $\ess_i=\{\pm \ee_1, \ee_2, \ee_3\}\subset \R^3$.  The linearly independent sets $\{\ee_1, \ee_2 \}$ and $\{-\ee_1, \ee_2\}$ both find the unit normal vector $\pee=\ee_3$.  Similarly, the linearly independent sets $\{\ee_1, \ee_3 \}$ and $\{-\ee_1, \ee_3\}$ both find the unit normal vector $\pee=\ee_2$.  As such, when constructing $\mathcal{P}_i$ it would be wise to implement a strategy to detect and remove repeated vectors.  Notice that this process is identifying the facets of a polyhedral cone, which can grow exponential in dimension.

Step 1 of Algorithm \ref{alg:notContainedSOCP} then requires solving a second-order conic program for each $C_i$.  Note that the  constraint $\|\uu\|\le 1$ in Step (1.2) relaxes the sphere constraint $\|\uu\|=1$.  Since $t_i^*>0$ for every pointed full-dimensional cone, scaling $\uu$ toward the unit sphere strictly improves the objective function $t$. Hence, every optimal solution  satisfies $\|\uu_i^*\|=1$. Furthermore, note that  $\uu_i^*$ must be in $\intt \pspan(\Set_i)$ since $\pe^\top \uu_i^*>0$ for all $\pe \in \Pe_i$ (this is used in Theorem \ref{thm:Alg2works}). 

We now focus on proving that Algorithm \ref{alg:notContainedSOCP} returns the correct output.

\begin{lemma}\label{lem:boundcm}
Suppose $D^c$ is a set of non-descent directions not contained in a closed half-space of $\R^n$   with its spherical complement written in the form
    $$\inttD=\Sn \cap \bigcup_{i=1}^k \intt \pspan (\ess_i),$$
where each $\ess_i$ is a finite set of nonzero vectors in $\R^n$ and $\pspan (\ess_i) \, \cap \, \pspan(\ess_j)=\{\zero\}$ for $i \neq j$.  For each $i \in \{1, 2, \ldots, k\}$ define $\ess^\circ_i = \Sn \, \cap \, \intt \pspan (\ess_i)$.  
Then, for all $i$, 
    $$0<\cm{(\ess^\circ_i)^c}\leq1,$$
with $\cm{(\ess^\circ_i)^c}=1$ if and only if $\ess^\circ_i = \emptyset$.
\end{lemma}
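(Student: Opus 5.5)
Here $(\ess^\circ_i)^c = \Sn \setminus \ess^\circ_i$, which is a closed subset of the sphere because $\ess^\circ_i$ is relatively open in $\Sn$. The plan is to read off the upper bound from the definition, get strict positivity from Proposition~\ref{prop:cosinemeasurepos}, and handle the equality case with Lemma~\ref{lem:cmfxEqualOne}.

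\textbf{Upper bound and the empty case.} The bound $\cm{(\ess^\circ_i)^c}\le 1$ is immediate, since $\uu^\top\dd\le 1$ for all unit vectors $\uu,\dd$. If $\ess^\circ_i=\emptyset$, then $(\ess^\circ_i)^c=\Sn$. Taking $\dd=\uu$ for any unit $\uu$ gives a supremum of $1$, so the cosine measure is $1$. This is the same computation as in Theorem~\ref{thm:maintheorem}(iv) and the argument behind Lemma~\ref{lem:cmfxEqualOne}.

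\textbf{Strict positivity.} By Proposition~\ref{prop:cosinemeasurepos}, it suffices to show that $(\ess^\circ_i)^c$ positively spans $\R^n$. Equivalently, by the argument of Theorem~\ref{thm:maintheorem}(ii), it suffices to show that for every unit $\uu$ there is $\dd\in(\ess^\circ_i)^c$ with $\uu^\top\dd>0$.

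I would use the fact that $\pspan(\ess_i)$ is a proper convex cone. It cannot be all of $\R^n$: otherwise $\inttD$ would be all of $\Sn$, so $\cl D^c=\emptyset$. That is impossible, because $D^c$ is not contained in a closed half-space and so is nonempty. Hence $\pspan(\ess_i)$ lies in some closed half-space $\{\vv:\pee^\top\vv\ge 0\}$, and $\ess^\circ_i\subseteq\{\vv:\pee^\top\vv>0\}$ on the open side.

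Now fix a unit $\uu$. If $\uu\notin\ess^\circ_i$, take $\dd=\uu$, so $\uu^\top\dd=1>0$. If $\uu\in\ess^\circ_i$, then $\uu\neq\pm\pee$ fails only in the case $\uu=\pee$. In general, pick a unit $\dd$ in the closed half-space $\{\pee^\top\vv\le 0\}$, which lies in $(\ess^\circ_i)^c$, with $\uu^\top\dd>0$. For example, take $\dd$ proportional to the component of $\uu$ orthogonal to $\pee$ when that component is nonzero. This works because $n\ge 2$, and the case $n=1$ is excluded as noted after Algorithm~\ref{alg:notContainedSOCP}.

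If $\uu$ is parallel to $\pee$, I expect this to be the main obstacle. In that case I would instead use a boundary point of the cone. Since $\pspan(\ess_i)$ is a proper cone of dimension at least $2$ (its interior is nonempty), its boundary contains unit vectors $\dd$ with $\uu^\top\dd>0$. Such a $\dd$ lies in $(\ess^\circ_i)^c$, because $\ess^\circ_i$ consists only of interior points.

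\textbf{Equality case.} For the converse of the equivalence, suppose $\ess^\circ_i\ne\emptyset$. Pick $\uu\in\ess^\circ_i$. It has a relatively open spherical neighbourhood of radius $\epsilon$ inside $\ess^\circ_i$. Then every $\dd\in(\ess^\circ_i)^c$ satisfies $\uu^\top\dd\le\cos\epsilon<1$, so the cosine measure is strictly less than $1$. This is exactly the $(iii)\Rightarrow(i)$ direction of Lemma~\ref{lem:cmfxEqualOne} applied to the closed set $(\ess^\circ_i)^c$.
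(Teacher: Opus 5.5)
Your upper bound, the empty case, and the equality case (the $\epsilon$-cap around a point of $\ess^\circ_i$) are fine. The strict-positivity step, however, has a genuine gap in the subcase $\uu=\pee\in\ess^\circ_i$. You claim that the boundary of $\pspan(\ess_i)$ contains unit vectors $\dd$ with $\pee^\top\dd>0$, on the grounds that the cone is not all of $\R^n$ and has nonempty interior. This fails when $\pspan(\ess_i)$ is exactly the closed half-space $\{\vv:\pee^\top\vv\ge 0\}$. Take for instance $\ess_i=\{\ee_1,-\ee_1,\ee_2\}\subset\R^2$ with $\pee=\uu=\ee_2$. The boundary is $\spann\{\ee_1\}$, so every boundary unit vector is orthogonal to $\uu$. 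Moreover $(\ess^\circ_i)^c$ is the closed lower semicircle and $\cm{(\ess^\circ_i)^c}=0$. This cone has every property your argument uses, so no choice of boundary point rescues the step. The underlying problem is that you use the hypothesis that $D^c$ is not contained in a closed half-space only to rule out $\pspan(\ess_i)=\R^n$, yet positivity genuinely depends on it.

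The missing observation is the inclusion $D^c\subseteq(\ess^\circ_i)^c$. It holds because $\ess^\circ_i\subseteq\inttD$ and $\inttD$ is disjoint from $\cl D^c$. The cosine measure is monotone under inclusion: for each $\uu$ the supremum over the larger set is at least as large, hence so is the minimum over $\uu$. Therefore $\cm{(\ess^\circ_i)^c}\ge\cm{D^c}>0$ by Theorem~\ref{thm:maintheorem}(ii). This is precisely the paper's proof, and it replaces your entire case analysis on $\uu$ versus $\pee$. It also covers $n=1$ without your appeal to the remark after Algorithm~\ref{alg:notContainedSOCP}, which concerns the algorithm rather than the lemma. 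If you prefer to keep the direct construction, you must split off the half-space case and exclude it using that same inclusion, since it would force $D^c\subseteq\{\vv:\pee^\top\vv\le 0\}$. For any other cone you must justify the boundary claim, for example by following a segment from an interior point to a point $\mathbf{y}\notin\pspan(\ess_i)$ with $\pee^\top\mathbf{y}>0$.
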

\begin{proof} Since $D^c$ is not contained in a closed half-space, it follows from Theorem \ref{thm:maintheorem} that $0<\cm{D^c} \leq \cm{(\ess^\circ_i)^c}$ for all $i$.  The second statement follows from Lemma \ref{lem:cmfxEqualOne}.\qed
\end{proof}

Next, we show that the cosine measure of $D^c$ is obtained by considering  the appropriate single set $\ess_i$.  This greatly simplifies the remaining proofs.

\begin{lemma}\label{lem:onewilldo}
    Suppose $D^c$ is a set of non-descent directions not contained in a closed half-space of $\R^n$ such that $\cl D^c \neq \Sn$ and with its spherical complement written in the form
$$\inttD=\Sn \cap \bigcup_{i=1}^k \intt \pspan (\ess_i),$$
where each $\ess_i$ is a finite set of nonzero vectors in $\R^n$ and $\pspan (\ess_i) \,\cap \, \pspan(\ess_j)=\{\zero\}$ for $i \neq j$.  For each $i \in \{1, 2, \ldots, k\}$ define $\ess^\circ_i = \Sn \, \cap \, \intt \pspan (\ess_i)$.
Then there exists $i \in \{1, 2, \ldots, k\}$ such that
    $$\cm{D^c} = \cm{(\ess^\circ_i)^c}.$$
\end{lemma}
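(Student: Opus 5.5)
The strategy is to pass to the sets $\cl D^c = \Sn \setminus \inttD$ and $(\ess^\circ_i)^c = \Sn\setminus \ess^\circ_i$, where complements are taken in $\Sn$. Then $\cl D^c = \bigcap_{i} (\ess^\circ_i)^c$, and each $(\ess^\circ_i)^c$ is closed. Since $\cl D^c \subseteq (\ess^\circ_i)^c$ for every $i$, monotonicity of the cosine measure gives $\cm{D^c} = \cm{\cl D^c} \leq \cm{(\ess^\circ_i)^c}$ for all $i$; this is the inequality already used in Lemma \ref{lem:boundcm}. It therefore suffices to find one index $i$ for which the reverse inequality holds.

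To get the reverse inequality, I fix a cosine vector $\uu \in \V(D^c)$ and set $c = \cm{D^c} = \max_{\dd\in \cl D^c}\uu^\top\dd$. By Lemma \ref{lem:boundcm} and $\cl D^c\neq\Sn$ (Lemma \ref{lem:cmfxEqualOne}) we have $0<c<1$. The open spherical cap $B = \{\dd\in\Sn : \uu^\top \dd > c\}$ is therefore nonempty and connected, and it lies entirely in $\inttD$, since every point of $\cl D^c$ has $\uu^\top\dd\le c$. Connectedness of the cap for $c<1$ and $n\ge 2$ (the case $n=1$ is excluded as in the remarks after Algorithm \ref{alg:notContainedSOCP}) is the key geometric fact.

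Next I use the decomposition. The sets $\ess^\circ_j$ are relatively open in $\Sn$, and they are pairwise disjoint because $\pspan(\ess_i)\cap\pspan(\ess_j)=\{\zero\}$. Hence $B=\bigcup_j (B\cap \ess^\circ_j)$ is a partition of a connected set into relatively open pieces, so $B\subseteq \ess^\circ_i$ for a single index $i$. Consequently $(\ess^\circ_i)^c \subseteq \Sn\setminus B = \{\dd : \uu^\top\dd\le c\}$, which gives $\cm{(\ess^\circ_i)^c} \le \max_{\dd\in(\ess^\circ_i)^c}\uu^\top\dd \le c$. Combined with the first paragraph, this yields equality, and it also shows $\uu\in\V((\ess^\circ_i)^c)$, which is what the later proof of Algorithm \ref{alg:notContainedSOCP} needs.

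The main obstacle is the connectedness and disjointness step. I must check that $B\neq\emptyset$, which follows from $c<1$, and that the $\ess^\circ_j$ really are disjoint open subsets of the sphere; the latter needs the trivial-intersection hypothesis, and the overlapping example in Figure \ref{fig:OverlappingCones} shows it cannot be dropped. If connectedness of the cap feels too topological, a fallback is to argue directly: for two points of $B$ in different cones, the short great-circle arc joining them stays in $B$ (caps with $c>0$ are geodesically convex). That arc would have to leave the open cone $\intt\pspan(\ess_i)$ through a boundary point of $\pspan(\ess_i)$. Such a point lies in $\cl D^c$, because it is not in any other $\intt\pspan(\ess_j)$ by disjointness, and this contradicts $\uu^\top\dd>c$ on $B$.
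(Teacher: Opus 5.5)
Your proof is correct and follows the paper's argument. Monotonicity gives $\cm{D^c}\le\cm{(\ess^\circ_i)^c}$ for every $i$. The reverse inequality comes from taking the open cap $\{\dd\in\Sn:\uu^\top\dd>\cm{D^c}\}$ around a cosine vector $\uu$ and using its connectedness, together with the pairwise disjointness of the $\ess^\circ_j$, to trap it inside a single $\ess^\circ_i$. Your version is slightly more careful than the paper's in two places: you use $\cl D^c$ rather than $D^c$ to guarantee the cap lies in $\inttD$, and you make explicit that $n\ge 2$ and the relative openness of the $\ess^\circ_j$ are what the connectedness step requires.
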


\begin{proof}
Note that $\ess_i^\circ$ is non-empty for at least one $i$ since $\cl(D^c) \neq \Sn$. Without loss of generality, assume that $\ess_i^\circ \neq \emptyset$ for all $i \in \{1, 2, \dots, k\}.$  Since $ D^c \subseteq (\ess^\circ_i)^c$ for every $i$, we get
$\cm{D^c} \leq \cm{(\ess^\circ_i)^c}$ for all $i.$

For the reverse inequality, let $\uu^* \in \CV(D^c)$ and define 
$\gamma = \cm{D^c}<1.$ Consider the open spherical cap
$$\mathcal{C} = \{\dd \in \Sn : \dd^\top \uu^* > \gamma\}.$$
Since $\gamma < 1$, we must have  
$\uu^* \in \mathcal{C}$, so $\mathcal{C} \neq \emptyset$. By definition of 
$\cm{D^c}$, every $\dd \in D^c$ satisfies $\dd^\top \uu^* \leq \gamma$. Hence, 
$\mathcal{C} \cap  D^c = \emptyset$  and it follows
$\mathcal{C} \subseteq \bigcup_{i=1}^k \ess^\circ_i$.  Since 
$\pspan(\ess_i) \cap \pspan(\ess_j) = \{\zero\}$ for $i \neq j,$  and   $\mathcal{C}$ is connected, there exists a unique index $i^*$ 
such that $\mathcal{C} \subseteq \ess^\circ_{i^*}$.  Therefore,  every 
$\dd \in (\ess^\circ_{i^*})^c$ satisfies $\dd^\top \uu^* \leq \gamma$. This implies
$$\cm{(\ess^\circ_{i^*})^c} \;\leq\; \max_{\dd \in (\ess^\circ_{i^*})^c} 
\dd^\top \uu^* \;\leq\; \gamma \;=\; \cm{D^c}.$$
\qed
\end{proof}

Lemma \ref{lem:onewilldo} allows us to simplify the remaining proofs, as it allows us to  consider  each $\pspan(\ess_i)$ one at a time.   That is, it suffices to show that $(t^*_i, \uu^*_i)$, from Step 1 of Algorithm \ref{alg:notContainedSOCP}, provides the cosine measure and a cosine vector for $(\ess^\circ_i)^c$.  In Step 2, we simply take the minimum of $\sqrt{1-(t^*_i)^2}$ over $i$ to obtain the cosine measure of $D^c$.

\begin{theorem}\label{thm:Alg2works}
Suppose $D^c$ is a set of non-descent directions not contained in a closed half-space of $\R^n$ with its spherical complement written in the form
    $$\inttD=\Sn \cap \bigcup_{i=1}^k \intt \pspan (\ess_i),$$
where each $\ess_i$ is a finite set of nonzero vectors in $\R^n$ and $\pspan (\ess_i) \, \cap \, \pspan(\ess_j)=\{\zero\}$ for $i \neq j$. 

For each $i \in \{1, 2, \ldots, k\}$ define $\ess^\circ_i = \Sn \, \cap \, \intt \pspan (\ess_i)$.  Let $\pp_i \subset \R^n$ be the finite set containing the unit normal vector $\pee_{j_i}$ for each boundary hyperplane $H_{j_i}$ of $\pspan(\ess_i)$ such that $\pee_{j_i}^\top \x\geq 0$ for all $\x \in \pspan(\ess_i)$ and for all $j_i \in \{1, 2, \dots, m_i\}, i \in \{1, 2, \dots, k\}.$ If $\ess_i^\circ \neq \emptyset$, then  
\begin{align*}
\CM((\ess^\circ_i)^c)&=\max_{\pee \in \pp_i} \sqrt{1-(\pee^\top \uu_i^*)^2},
\end{align*}
where $\uu_i^*$ is the unique unit vector solving the SOCP in Step (1.2) of Algorithm \ref{alg:notContainedSOCP}.  Consequently, Algorithm \ref{alg:notContainedSOCP} returns the cosine measure and the cosine vector set for $D^c$.
\end{theorem}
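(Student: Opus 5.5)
Fix $i$ with $\ess_i^\circ \neq \emptyset$ and write $C=\pspan(\ess_i)$, which is a full-dimensional polyhedral cone. Since $\pspan(\ess_i)\cap\pspan(\ess_j)=\{\zero\}$ for $j\neq i$, the cone $C$ must be pointed, because otherwise the overlap condition with any other cone would be hard to meet. I would treat pointedness as given, as the remarks before the theorem implicitly do. Then
\[
C=\{\vv : \pee^\top \vv \ge 0 \ \forall \pee\in\pp_i\}, \qquad \intt C=\{\vv:\pee^\top\vv>0\ \forall \pee\in\pp_i\},
\]
and $(\ess_i^\circ)^c = \Sn\setminus \intt C$ consists of the unit vectors $\dd$ with $\min_{\pee\in\pp_i}\pee^\top\dd\le 0$.

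For a unit vector $\uu$, set $g(\uu)=\sup_{\dd\in(\ess_i^\circ)^c}\uu^\top\dd$. The key geometric step is a formula for $g$:
\[
g(\uu)=\max_{\pee\in\pp_i}\sqrt{1-(\pee^\top\uu)^2}\quad \text{if } \uu\in\intt C, \qquad g(\uu)=1 \text{ otherwise.}
\]
The second case is immediate, since then $\uu$ itself lies in the complement. For the first case, the complement is the union over $\pee\in\pp_i$ of the closed hemispheres $\{\dd\in\Sn:\pee^\top\dd\le0\}$. Maximizing $\uu^\top\dd$ over one such hemisphere when $\pee^\top\uu>0$ gives the boundary point
\[
\dd=\frac{\uu-(\pee^\top\uu)\pee}{\|\uu-(\pee^\top\uu)\pee\|},
\]
with value $\sqrt{1-(\pee^\top\uu)^2}$. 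This is a short Lagrange/Cauchy–Schwarz computation. Taking the maximum over $\pee$ gives the formula.

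Since $g<1$ exactly on $\intt C$, the minimizers of $g$ lie in $\intt C$. Minimizing $\max_\pee\sqrt{1-(\pee^\top\uu)^2}$ there is equivalent to maximizing $\min_{\pee}\pee^\top\uu$ over unit $\uu$, because on $\intt C$ every $\pee^\top\uu$ is positive and $s\mapsto\sqrt{1-s^2}$ is decreasing on $[0,1]$. This is exactly the SOCP (1.2) with the sphere relaxed to the ball.

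It remains to justify the relaxation and uniqueness. The optimal value $t_i^*$ is positive because $\intt C\neq\emptyset$. Hence any optimal $\uu$ has norm $1$, since rescaling $\uu$ to $\uu/\|\uu\|$ would strictly increase every $\pee^\top\uu$. For uniqueness, suppose $\uu\neq\vv$ are two optimal unit solutions. Then $(\uu+\vv)/\|\uu+\vv\|$ has norm-inflated inner products
\[
\pee^\top\frac{\uu+\vv}{\|\uu+\vv\|}\ \ge\ \frac{2t_i^*}{\|\uu+\vv\|}\ >\ t_i^*,
\]
a contradiction. This mirrors Lemma \ref{lem:oneVector}. Together these give $\CM((\ess_i^\circ)^c)=\sqrt{1-(t_i^*)^2}=\max_{\pee}\sqrt{1-(\pee^\top\uu_i^*)^2}$.

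For the algorithm's output, I would combine this with Lemma \ref{lem:onewilldo}. Since $D^c\subseteq(\ess_i^\circ)^c$ for every $i$, we have $\CM(D^c)\le\min_i \CM((\ess_i^\circ)^c)$, and the lemma provides equality at some index. This gives (2.1). For (2.2), let $\uu\in\CV(D^c)$. The proof of Lemma \ref{lem:onewilldo} shows that $\uu$ is a cosine vector of some $(\ess_{i^*}^\circ)^c$ attaining the minimum, so by uniqueness $\uu=\uu_{i^*}^*$ with $t_{i^*}^*$ maximal. Conversely, consider $\uu_i^*$ with $t_i^*$ maximal. Its open cap $\{\dd:\dd^\top\uu_i^*>\gamma\}$ lies inside $\intt C_i$ by the computation of $g$. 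Hence $\sup_{D^c}\uu_i^{*\top}\dd\le\gamma$, so $\uu_i^*\in\CV(D^c)$.

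I expect the main obstacle to be the geometric formula for $g$. In particular, one must check that the hemisphere maximizer is actually attained with $\pee^\top\dd=0$ when $\pee^\top\uu>0$. One must also check that no $\pee\in\pp_i$ is parallel to $\uu$, which would make the boundary point degenerate. That case cannot occur with norm-one $\uu$ in the interior unless $C$ is a half-space, and a half-space is excluded by pointedness.
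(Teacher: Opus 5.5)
Your argument is correct. Its overall structure matches the paper's:
\begin{enumerate}[(i)]
\item identify the inner supremum as $\max_{\pee}\sqrt{1-(\pee^\top\uu)^2}$ on $\intt C$;
\item pass to $\max\min\pee^\top\uu$ by monotonicity;
\item relax the sphere to the ball and prove uniqueness;
\item invoke Lemma \ref{lem:onewilldo} when there are several cones.
\end{enumerate}

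The key geometric step is done differently. The paper restricts the inner maximization to the normalized boundary $\bpspan$ and projects $\uu$ onto the facet hyperplanes $H_{j_i}$. It then has to argue that the projection onto the nearest hyperplane lands inside $\pspan(\ess_i)$. You instead write $(\ess_i^\circ)^c$ as the union of the full closed hemispheres $\{\dd\in\Sn:\pee^\top\dd\le 0\}$, $\pee\in\pp_i$. Each hemisphere is then handled in closed form via $\uu=(\pee^\top\uu)\pee+\mathbf{w}$ and $\uu^\top\dd\le\mathbf{w}^\top\dd\le\|\mathbf{w}\|$, so the question of where a projection lands never arises. The paper's version keeps the facet-distance picture behind the sine-measure remark; yours is shorter and self-contained.

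Your treatment of Step (2.2) is also more complete than the paper's bare citation of Lemma \ref{lem:onewilldo}. You prove both inclusions for $\CV(D^c)$: one direction uses the cap argument from that lemma's proof plus uniqueness, the other uses $D^c\subseteq(\ess_i^\circ)^c$.

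Two side remarks in your proposal are false, although nothing rests on them.

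First, $\pspan(\ess_i)$ need not be pointed. For $k=1$ the overlap condition is vacuous. Take $F=\max\{-x_1,-x_2\}$ on $\R^3$ at $\zero$: it gives $C=\{\x: x_1\ge 0,\ x_2\ge 0\}$, which contains the $x_3$-axis. Yet $\cl D^c\supseteq\{\pm\e_1,\pm\e_2,\pm\e_3\}$ lies in no closed half-space. The $\ell_1$ norm at $(1,1,0)$ from Section \ref{sec:cmNonSmooth} behaves the same way. Your argument only uses that $C$ is full-dimensional and $C\neq\R^n$, hence the intersection of its facet half-spaces, so simply drop the pointedness assumption.

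Second, a facet normal can lie in $\intt C$ even for a pointed cone. In $\R^2$, $C=\{\vv: v_2\ge 0,\ v_1+v_2\ge 0\}$ is pointed and has inner facet normal $\e_2\in\intt C$. The degenerate case needs no exclusion, however. If $\uu=\pee$, the bound $\uu^\top\dd\le 0$ on the hemisphere is attained by any unit $\dd\perp\pee$, which exists because the hypotheses force $n\ge 2$. The value is then $0=\sqrt{1-(\pee^\top\uu)^2}$, as your formula requires.
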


\begin{proof} Consider $i$ such that $\ess_i^\circ \neq \emptyset$.  Note that this implies $\cm{(\ess_i^\circ)^c} < 1$.   Define the normalized frontier of $\pspan(\Set_i)$ via
    $$\bpspan = \left(\cl \pspan(\Set_i) \setminus \intt \pspan(\Set_i)\right) \cap \Sn.$$
Note that
\begin{align}
    \CM((\ess^\circ_i)^c)&=\min_{\substack{\uu \in \R^n\\ \Vert \uu \Vert=1}} \sup_{\dd \in (\ess^\circ_i)^c} \dd^\top \uu \notag \\
    &= \min_{\substack{\uu \in \R^n\\ \Vert \uu \Vert=1}} \max_{\dd \in \cl((\ess^\circ_i)^c)} \dd^\top \uu \notag \\
    &=\min_{\uu \in \ess^\circ_i} \max_{\dd \in \cl((\ess^\circ_i)^c)}  \dd^\top \uu &&(\text{since $\CM(D^c)<1$, so $\uu$ cannot be in $(\ess^\circ_i)^c$}) \notag \\
    &=\min_{\uu \in \ess^\circ_i}  \, \max_{\substack{\dd \in \cl \pspan(\Set_i) \setminus \intt \pspan(\Set_i)\\ \|\dd\| = 1}} \dd^\top \uu \notag \\
    &=\min_{\uu \in \ess^\circ_i}  \, \max_{\dd \in \bpspan} \dd^\top \uu, \label{eq:uInInterior}
\end{align}
since the maximum is attained by a unit vector on the boundary of $\pspan(\Set_i)$.  

Let $\uu_i^* \in \ess_i^\circ$ be a solution to \eqref{eq:uInInterior}   and let $H_{1_i}, \dots, H_{m_i}$ be the boundary hyperplanes of $\pspan(\ess_i)$.  Denote the projection of the vector $\uu_i^*$ onto the hyperplane $H_{j_i}$ by $\Proj_{H_{j_i}} \uu_i^*$. Also, denote a boundary hyperplane  containing a unit vector $\dd \in \R^n$ by $H_\dd.$ We have
\begin{align*}
0<\max_{\dd \in \bpspan} \dd^\top \uu_i^* &=\max_{\dd \in \bpspan} \dd^\top (\Proj_{H_\dd} \uu_i^*)\\
&=\max_{j_i \in \{ 1, 2, \dots, m_i\}} \left (\frac{\Proj_{H_{j_i}} \uu_i^*}{\Vert \Proj_{H_{j_i}} \uu_i^* \Vert} \right )^\top \Proj_{H_{j_i}} \uu_i^* \\
&=\max_{j_i \in \{1, 2, \dots, m_i\}} \Vert \Proj_{H_{j_i}} \uu_i^* \Vert.
\end{align*}
Note that $\Proj_{H_{j_i}} \uu_i^*$ need not lie in $\pspan(\ess_i)$. However,
 the maximum of $\Vert \Proj_{H_{j_i}} \uu_i^* \Vert$ over $j_i$ 
is attained at a hyperplane realizing the minimal distance from 
$\uu_i^*$, for which $\Proj_{H_{j_i}} \uu_i^* \in  \pspan(\ess_i).$ Next, note that

\begin{align*}
\max_{j_i \in \{1, 2, \dots, m_i\}} \Vert \Proj_{H_{j_i}} \uu_i^* \Vert&=\max_{j_i \in \{1, 2, \dots, m_i\}}\sin(\varphi_{\pee_{j_i}})
\end{align*}
where $0<\varphi_{\pee_{j_i}}<\pi/2$ is the angle between the positive normal vector $\pee_{j_i}$ associated to the hyperplane $H_{j_i}$ and the vector $\uu_i^*$. Since $(\uu_i^*)^\top \pee_{j_i}=\cos(\varphi_{\pe_{j_i}})$ and $\sin(\varphi_{\pe_{j_i}})=\sqrt{1-\cos^2(\varphi_{\pe_{j_i}})},$ we obtain 
\begin{align*}
\cm{(\ess^\circ_i)^c}&=\max_{\pe \in \Pe_i} \sqrt{1-(\pe^\top \uu_i^*)^2}.
\end{align*}
Now, since $\Pe_i$ is a non-empty compact set, and $\phi(t)=\sqrt{1-t^2}$ is a strictly decreasing  continuous function on $0<t<1$,  we have $$\min_{\uu \in \ess_i^\circ} \max_{\pe \in \Pe_i} \sqrt{1-(\pe^\top \uu)^2}=\sqrt{1-\left ( \max_{\uu \in \ess_i^\circ} \min_{\pe \in \Pe_i} \pe^\top \uu\right )^2}.$$ Moreover, the two problems have identical solution sets $$\argmin_{\uu \in \ess_i^\circ} \max_{\pe \in \Pe_i} \sqrt{1-(\pe^\top \uu)^2}=\argmax_{\uu \in \ess_i^\circ} \min_{\pe \in \Pe_i} \pe^\top \uu.$$ 
Next, we prove  that  
\begin{equation} \label{eq:siToRn}
\max_{\uu \in \ess_i^\circ} \min_{\pe \in \Pe_i} \pe^\top \uu=\max_{\substack{\uu \in \R^n \\ \Vert \uu \Vert=1}} \min_{\pe \in \Pe_i} \pe^\top \uu.
\end{equation}
Indeed,  since $\ess_i^\circ$ is  a subset of $\Sn$, we get $$\max_{\uu \in \ess_i^\circ} \min_{\pe \in \Pe_i} \pe^\top \uu \leq \max_{\substack{\uu \in \R^n \\ \Vert \uu \Vert=1}} \min_{\pe \in \Pe_i} \pe^\top \uu.$$ Let  $f(\uu)=\min_{\pe \in \Pe_i} \pe^\top \uu$ and $\uu^*$ be a  unit vector achieving the maximum of the right hand side of \eqref{eq:siToRn}.  Then for any  $\tilde{\uu} \in \ess_i^\circ,$ we have  $f(\uu^*) \geq f(\tilde{\uu})>0.$  Note that $f(\uu^*)>0$ implies $\uu^*$ is in $\ess_i^\circ.$ We get  $$\max_{\tilde{\uu} \in \ess_i^\circ} f(\tilde{\uu}) \geq f(\uu^*)=\max_{\substack{\uu \in \R^n \\\Vert \uu \Vert=1}} f(\uu).$$  Besides,  since the optimal value of $f(\uu)$ is positive, every maximizer over $\Sn$  satisfies $\pe^\top \uu^*>0$. Hence, both sides of \eqref{eq:siToRn} have the same solution set.

Finally, using similar arguments as in Lemmas \ref{lem:oneVector}, \ref{lem:uIsUnitary} and  Lemma 33 in \cite{audet2024cosine}, we obtain that the convex relaxation of the problem  
\begin{equation}\label{eq:nonConvexProb}
\max_{\substack{\uu \in \R^n \\\Vert \uu \Vert=1}} \min_{\pe \in \Pe_i} \pe^\top \uu
\end{equation}
may be considered:   
\begin{equation}\label{eq:ConvexProgram}
\max_{\substack{\uu \in \R^n \\ \Vert \uu \Vert \leq 1}} \min_{\pe \in \Pe_i} \pe^\top \uu.
\end{equation}
The convex relaxation returns the unique unit vector solving the nonconvex program  \eqref{eq:nonConvexProb}. Step (1.2) of Algorithm \ref{alg:notContainedSOCP} is a reformulation of the convex program \eqref{eq:ConvexProgram}. 
In the case where $D^\circ$ consists of several polyhedral cones $\pspan(\ess_i)$, the correctness of Step 2 follows from Lemma~\ref{lem:onewilldo}. \qed
\end{proof}

\begin{remark}
Given a full-dimensional pointed polyhedral cone $C=\pspan(\Set)\subsetneq \R^n,$ we could define the \emph{sine measure of $C$} (denoted by $\sm(C)$) by   
\begin{equation} \label{eq:sineProb}
\sm(C)=\min_{\substack{\uu \in C\\ \Vert \uu \Vert=1}} \max_{\pee \in \pp} \sqrt{1-(\pee^\top \uu)^2}=\min_{\substack{\uu \in C\\ \Vert \uu \Vert=1}} \max_{\pee \in \pp} \sin(\varphi)=\sqrt{1-\left ( \max_{\substack{\uu \in \R^n \\ \Vert \uu \Vert \leq 1}} \min_{\pe \in \Pe} \pe^\top \uu\right )^2}
\end{equation}
where $0<\varphi<\pi/2$ is the angle between $\uu$ and $\pe$,  and where $\Pe$ is the finite set of positive unit normal vectors for the boundary hyperplanes of $C$.  That is, find the unit vector in $C$ that is as far as possible from its nearest boundary hyperplane. Equivalently, in terms of $\Pe$, find the unit vector which is as close as possible (in angle) to the farthest normal in $\Pe.$
A unit vector $\uu$ solving the sine measure problem is called the \emph{sine vector of $C.$} It is worth mentioning that  the sine vector of $C$ is equivalent to the \emph{incenter} of a \emph{proper cone} $K$ denoted by $\pi_{inc(K)}$ and introduced in \cite{HenrionSeeger2010,Henrion2010}.

Under this terminology, Step (1.2) is the second-order conic program to solve the sine measure problem for each $\pspan(\Set_i).$ From Theorem \ref{thm:Alg2works} , each $\uu_i^*$ found while solving the SOCP is a unit vector  in $\intt \pspan(\Set_i).$  However, it should be noted that even though the problem is an SOCP, the number of constraints for this problem grows with the number of boundary hyperplanes to $C$. Indeed, determining the constraints for the SOCP is equivalent to identifying the faces of a pointed polyhedral cone from its extreme rays (which can grow exponentially).
\end{remark}


\section{Computing the cosine measure of  the finite-max function} \label{sec:cmNonSmooth}

In this section, we consider a nonsmooth function of the form 
\begin{equation}\label{eq:maxfunction}
F=\max\{f_1, f_2, \dots, f_\ell\}, \quad \text{where} \quad f_i \in \mathcal{C}^1 \quad \text{for all} \; i \in \{1, 2, \dots, \ell\}.
\end{equation}
We will see that computing the cosine measure of $F$ at $\x$ relies on the active gradients. Indeed, the set $\Pe$ defined in Step (1.1) of Algorithm \ref{alg:notContainedSOCP} is exactly the negative of the set of normalized active gradients at the point $\x$.  A particular type of finite-max function, the $\ell_1$ norm is considered to conclude this section.  We begin by recalling the active  index set at $\x$. 
\begin{definition}[Active set of indices]
 Let $F : \mathbb{R}^n \to \mathbb{R}$ be defined by
\begin{equation*}
    F(\x)=\max_{i\,\in\,\{1, 2, \dots,\ell\}} f_i(\x),
\end{equation*}
where $f_i:\R^n  \to \R$ is in $\mathcal{C}^1.$
The \emph{active index set} of $F$ at $\x$ is denoted by $I(\x)$ and defined by 
\begin{equation*}
\label{eq:active_set}
    I(\x)=
    \bigl\{
        i \in \{1, 2,\dots, \ell\}
        \;:\;
        f_i(\x) = F(\x)
    \bigr\}.
\end{equation*}
\end{definition}
Note that $I(\x)$ is non-empty for any $\x$.

The set of non-descent directions of $F$ at $\x$ is given by 
    $$D^c=\bigcup_{i \in I(\x)}\{\dd \; : \; \nabla f_i(\x)^\top \dd \geq 0, \Vert \dd \Vert=1 \}.$$ 
The spherical complement of $\cl D^c$ in $\Sn$ is equal to  
    \begin{equation}\label{eq:descentDirMaxFunction}
    D^o=\left \{ \bigcap_{i \in I(\x)}\{ \dd \; : \; \nabla f_i (\x)^\top \dd<0, \,  \Vert \dd \Vert=1 \} \right \} \cap \Sn.
    \end{equation}
    
When $\vert I(\x) \vert=1$ and the gradient of the active function is not the zero vector, by Theorem \ref{thm:cmdiffpoint}, we obtain that $\CM(D^c)=0$ and $\V(D^c)=-\nabla f_{i^*} (\x)/\Vert \nabla f_{i^*}(\x) \Vert$ where $I(\x) = \{i^*\}$.

If one of the active gradient vectors is equal to the zero vector, then  the set of non-descent directions of  $F$ at $\x$ can take any form. In particualar, examples may be created (as we did in Section \ref{sec:exampleGradEqualZero}) showing that in this case the cosine measure of $F$ at $\x$ may take any value in [-1,1]. For this reason,  we assume that the set of active gradient vectors does not contain the zero vector in the  following algorithm. 

\medskip

\begin{algorithm}[H]
\caption{Computing $\CM(D^c)$ of a max function
\label{alg:maxFunction}}
Given  a function of the form 
$F=\max\{f_1, f_2, \dots, f_\ell\},$ where $f_i \in \mathcal{C}^1$ for all  $i \in \{1, 2, \dots, \ell\}$ and a point $\x \in \R^n$.

\medskip
\textbf{0. Special cases and initialization:} 

\quad Define the set of active indices  $I(\x)= \{ i \in \{1, 2,\dots, \ell\} \;:\; f_i(\x) = F(\x)\}.$ 

\quad If $\nabla f_i (\x) = \zero$ for any $i \in I(x)$, stop (cosine measure cannot be computed).

\quad If $\vert I(x)\vert=1,$ return $\CM(D^c)=0$,
$\V(D^c)=-\nabla f_{i^*} (\x)/\Vert \nabla f_{i^*} (\x) \Vert,$ where $\{i^*\} = I(\x)$.

\medskip
\textbf{1. SOCP set up and solution:}

\quad Define the set of active negative gradients of $F$ at $\x$:

$\begin{aligned}
    \qquad \quad (1.1)&\quad
    \mathcal{P}=\left \{-\frac{\nabla f_i(\x)}{\Vert \nabla f_i(\x) \Vert} \; : \; i \in I(\x)\right \}
\end{aligned}$

\quad Solve the second-order conic program (SOCP)

$\begin{aligned}
    \qquad \quad (1.2)&\quad (t^*,\uu^*) \in \argmax_{t,\;\uu} \{t:\|\uu\| \;\le\; 1, \pee^\top\uu \;\ge\; t
    \quad \forall\,\pee\in\mathcal{P} \; \}
\end{aligned}$

\medskip
\textbf{2. Compute the cosine measure and cosine vector set:} return
$\begin{aligned}
     \qquad \quad(2.1) &\quad\CM(D^c)
    \;=\;
    \sqrt{1 - (t^*)^{2}},
    \\[4pt]
     \qquad \quad(2.2) &\quad\V(D^c)
    \;=\;
\begin{cases}
    \{\uu^*\} & \text{if } t^* > 0, \\
    \Sn       & \text{if } t^* = 0.
\end{cases}
\end{aligned}$
\end{algorithm}

\smallskip 

Notice that Algorithm \ref{alg:maxFunction} is Algorithm \ref{alg:notContainedSOCP} with a few simplifications.  The set of positive normals is now given by the set of  normalized active negative gradients at $\x$. The outer loop is deleted since  the set of descent directions of $F$ at $\x$ can be represented by a single polyhedral cone $C$ (intersecting with $\Sn$) as described in \eqref{eq:descentDirMaxFunction}. Note that it is possible for $C$ to be empty.  For example, consider $f_1:\R^2 \to \R: \x \mapsto x_1, f_2:\R^2 \to \R: \x \mapsto -x_1$ at the point $\x=\zero.$ Then $F=\vert x_1 \vert$ and we have $\nabla f_1(\zero)=\ee_1$ and $\nabla f_2(\zero)=-\ee_1$ In this case, the SOCP returns $t=0$ and we get $\CM(D^c)=1, \V(D^c)=\mathbb{S}^2.$  Thus we have the following corollary.

\begin{corollary}
Given  a function of the form 
$F=\max\{f_1, f_2, \dots, f_\ell\},$ where $f_i \in \mathcal{C}^1$ for all  $i \in \{1, 2, \dots, \ell\}$ and a point $\x \in \R^n,$ Algorithm \ref{alg:maxFunction} returns the correct cosine measure and cosine vector set.
\end{corollary}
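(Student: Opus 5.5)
The plan is to reduce the corollary to Theorem \ref{thm:Alg2works} (correctness of Algorithm \ref{alg:notContainedSOCP}), after identifying $D^c$ and $\inttD$ for $F$ at $\x$. Throughout I will take the descent-direction description used in the paper at face value: $\dd$ is a non-descent direction iff $\nabla f_i(\x)^\top\dd\ge 0$ for some $i\in I(\x)$. Then $\inttD$ has the form \eqref{eq:descentDirMaxFunction}, which is $\Sn\cap\intt C$ with
\[
C=\{\dd:\ \pee^\top\dd\ge 0 \text{ for all } \pee\in\Pe\}.
\]
Here $\Pe$ is the set from Step (1.1). So $\inttD$ is described by a single polyhedral cone ($k=1$), and the disjointness hypothesis of Algorithm \ref{alg:notContainedSOCP} holds vacuously. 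Step 0 is handled separately. When some active gradient vanishes, the algorithm makes no claim. When $|I(\x)|=1$ with a nonzero gradient, Theorem \ref{thm:cmdiffpoint} applies directly, since $F$ agrees with the single active $f_{i^*}$ near $\x$ by continuity.

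For $|I(\x)|\ge2$, I would first verify the preconditions of Algorithm \ref{alg:notContainedSOCP}. The cone $D^c$ contains, for each active $i$, the closed hemisphere $\{\nabla f_i(\x)^\top\dd\ge0\}$. Two distinct hemispheres are not both contained in one closed half-space unless they coincide. If all the normalized active gradients coincide, the problem reduces to the single-gradient case, which I would treat as in Step 0 or absorb by allowing $\Pe$ to be a singleton (then $t^*=1$ and $\CM=0$, consistent with Theorem \ref{thm:cmdiffpoint}). Otherwise $D^c$ is not in a closed half-space.

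The key identification is that the set $\Pe_i$ of Step (1.1) of Algorithm \ref{alg:notContainedSOCP} for $C$ can be replaced by $\Pe$ itself. The facet normals of $C$ form a subset of $\Pe$, and the redundant elements of $\Pe$ do not change the SOCP value. The reason is that any $\pee\in\Pe$ is a nonnegative combination of the facet normals, so it is also nonnegative on $C$. I expect this to be the main obstacle. When $C$ is not pointed, or not full-dimensional, a redundant $\pee$ could still be binding in $\min_{\pee}\pee^\top\uu$. To handle it I would argue directly: the optimal $\uu^*$ with $t^*>0$ lies in $\intt C$, and $\CM((\Sn\cap\intt C)^c)$ equals the sine of the angular distance from $\uu$ to the boundary. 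That distance is $\min_{\pee}$ over genuine constraints, and including extra valid inequalities can only coincide with it at the optimum. Alternatively, I would rerun the proof of Theorem \ref{thm:Alg2works} with $\Pe$ in place of the facet normals, since that proof only used that $C=\{\dd:\pee^\top\dd\ge0\ \forall \pee\}$.

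Finally, I would treat the degenerate case $t^*=0$ separately. Here $\intt C=\emptyset$: by Gordan's alternative, $0$ lies in the convex hull of $\Pe$. Hence $\inttD=\emptyset$, and Lemma \ref{lem:cmfxEqualOne} gives $\CM(D^c)=1$ and $\V(D^c)=\Sn$, matching Step (2.2). When $t^*>0$, uniqueness of $\uu^*$ follows as in Lemmas \ref{lem:oneVector} and \ref{lem:uIsUnitary}, applied to the concave maximization. Step (2.1) then matches Theorem \ref{thm:Alg2works}, which completes the argument.
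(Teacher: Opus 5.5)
Your route is the paper's. The paper justifies the corollary only by remarking that Algorithm~\ref{alg:maxFunction} is Algorithm~\ref{alg:notContainedSOCP} with $k=1$, with the normalized negative active gradients playing the role of the positive facet normals, and by illustrating the case $t^*=0$ with $F=|x_1|$. You are more careful than the paper on the points it glosses over: parallel active gradients, $t^*=0$ via $\intt C=\emptyset$, uniqueness of $\uu^*$, and the fact that $\Pe$ may contain non-facet normals (e.g.\ $F=\max\{x_1,x_2,x_1+x_2\}$ at $\zero$). Taking the paper's description of $D^c$ at face value is also harmless. Only $\cl D^c$ enters the cosine measure, and with nonzero active gradients $\cl D^c$ is exactly the union of the closed hemispheres $\{\dd\in\Sn:\nabla f_i(\x)^\top\dd\ge 0\}$, $i\in I(\x)$.

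The one step you do not actually prove is the redundancy step. Your stated reason only shows that each $\pee\in\Pe$ defines a valid inequality $\pee^\top\dd\ge0$ on $C$. It does not show that the extra SOCP constraint $\pee^\top\uu\ge t$ leaves the optimum unchanged. Your fallback (``can only coincide with it at the optimum'') is not an argument. Also, $\max_{\dd\in\Sn\setminus\intt C}\dd^\top\uu$ is the \emph{cosine}, not the sine, of the angular distance from $\uu$ to the boundary.

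The missing line is the triangle-inequality trick from the proof of Theorem~\ref{thm:reducetofinite}:
\begin{enumerate}[(a)]
\item Let $\Pe_F\subseteq\Pe$ be the unit facet normals of the full-dimensional cone $C$; every facet-defining inequality must appear in any inequality description of $C$.
\item Since $C$ is the intersection of its facet half-spaces, Farkas' lemma writes each $\pee\in\Pe$ as $\pee=\sum_j\lambda_j\pee_j$ with $\lambda_j\ge0$, $\pee_j\in\Pe_F$, and $\sum_j\lambda_j\ge\|\pee\|=1$.
\item If $s=\min_{\pee_j\in\Pe_F}\pee_j^\top\uu\ge0$, then $\pee^\top\uu\ge(\sum_j\lambda_j)s\ge s$. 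Hence $\min_{\pee\in\Pe}\pee^\top\uu=\min_{\pee\in\Pe_F}\pee^\top\uu$ for every $\uu\in C$.
\item Since the optimal value is $t^*>0$, both SOCPs have the same value and the same solution set.
\end{enumerate}
Pointedness of $C$ is never used, so your worry about non-pointed cones disappears.

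Finally, your argument, like the paper's statement, silently needs $n\ge2$. Theorem~\ref{thm:cmdiffpoint} assumes it, and for $n=1$ the branch $|I(\x)|=1$ (or $t^*=1$) returns $0$ while the true value is $-1$. For example, $F(x)=x$ at $0$ has $D^c=\{1\}$.
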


We end with two examples demonstrating Algorithm \ref{alg:maxFunction}.

\begin{example}
\label{ex:cm_R2}
Consider two functions with domain $\R^2$  defined by
\[
    f_1(\x) = x_1+x_2, \qquad f_2(\x) = -x_1+x_2.
\]
Let $F(\x)=\max\{x_1+x_2,\,-x_1+x_2\}=|x_1|+x_2$, and consider the point $\x_0=\zero$.  From the contour plot (see Figure \ref{fig:exampleeasy}), it is easy to confirm that $\CM(F;\zero) = 1/\sqrt{2}$ and $\CV(F;\zero) = \{[0 ~~ -1]^\top\}$.

\begin{figure}[H]
    \centering
    \includegraphics[width=0.95\linewidth]{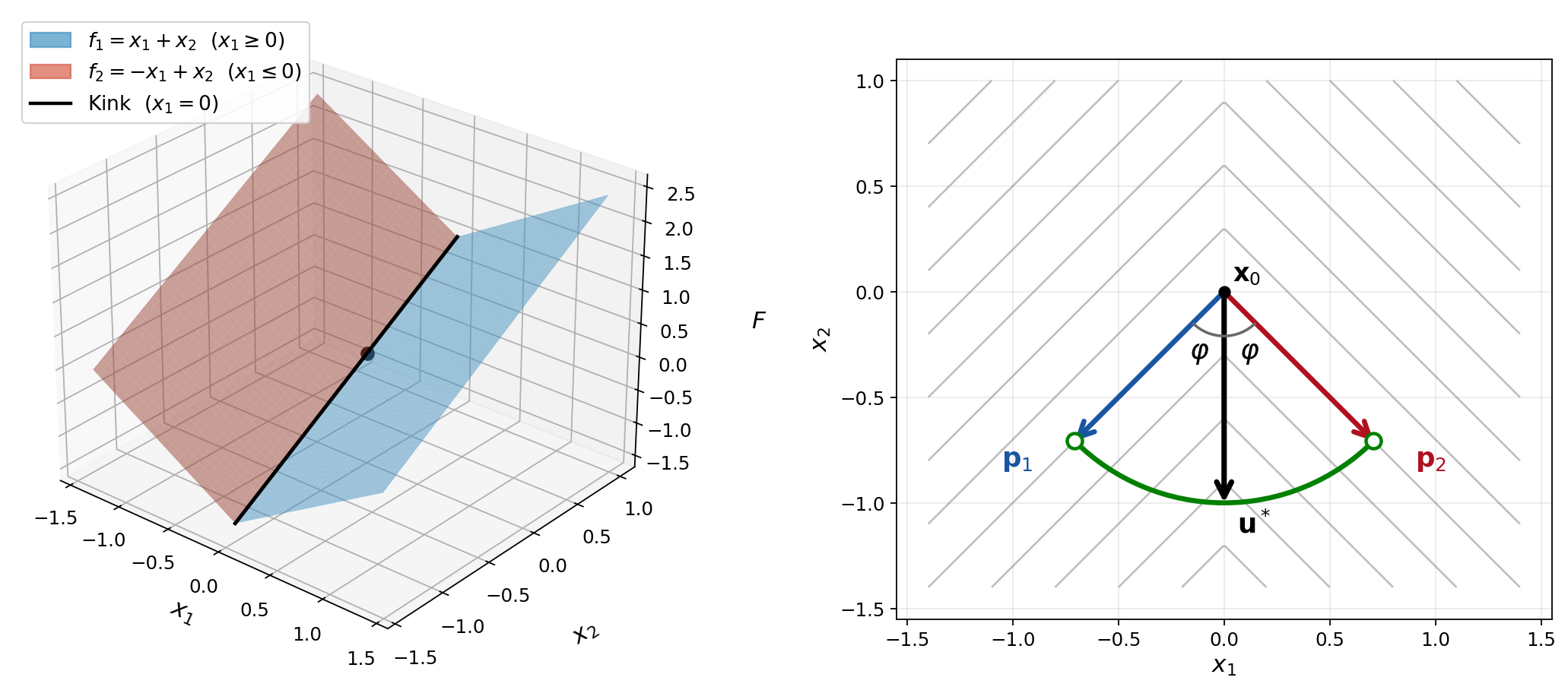}
    \caption{Graph and the descent directions of $F=\max\{x_1+x_2,-x_1+x_2\}$ at $\xo=\zero$ (represented by the green spherical cap).}
    \label{fig:exampleeasy}
\end{figure}

The active set of indices at $\x_0=\zero$ is $I(\x_0)=\{1,2\}$, so the normalized active negative gradient set is 
\[
    \pp=\left\{
        \pee_1=\frac{1}{\sqrt{2}}\bbm-1\\-1\ebm,
        \quad
        \pee_2=\frac{1}{\sqrt{2}}\bbm \ph 1\\-1\ebm
    \right\}.
\]
Solving the SOCP,
$$\begin{array}{rcl}\displaystyle
\max_{t,\;\uu} \left\{t:
        \begin{array}{rl} 
        \|\uu\| & \le 1 \\
        \pee^\top\uu &\ge t \quad \forall\,\pee\in\mathcal{P} \; 
        \end{array}
        \right\}
    &=&\displaystyle
    \max_{t,\;\uu} \left\{t:
        \begin{array}{rl}
            \|\uu\| & \leq 1 \\
            -u_1/\sqrt{2} - u_2/\sqrt{2} &\geq t \\
            u_1/\sqrt{2} - u_2\sqrt{2} &\geq t
        \end{array}     \right\}\\
    &=&\displaystyle
    \max_{t,\;\uu} \left\{t:
        \begin{array}{rl}
            \|\uu\| & \leq 1 \\
             - u_2 &\geq \sqrt{2} t \\
        \end{array}     \right\},\\
    \end{array}
    $$
we obtain
$$\CM(F;\zero)=\sqrt{1-(1/\sqrt{2})^2}=1/\sqrt{2} \quad \mbox{and} \quad \uu^*=\bbm \ph 0\\-1\ebm =\CV(F;\zero).$$
\end{example}

In Example \ref{ex:cm_R2}, the cosine vector $\uu^*$ is also the direction of steepest descent.  In general, this is not necessarily true.  For example, considering $F(\x)=\max\{x_1,2x_2\}$ at the point $\xo=\zero$, we find $\uu^*=-\frac{1}{\sqrt{2}} \bbm1&1\ebm^\top$ (and $\CM(D^c)=1/\sqrt{2}$) but the direction of steepest descent, denoted by $\uu_{sd}$, is $-\frac{1}{\sqrt{5}} \bbm 2&1 \ebm^\top$.  Figure \ref{fig:UsdNotEqualUStar} illustrates both vectors on the contour plot of $F$.
\begin{figure}[H]
    \centering
    \includegraphics[width=0.4\linewidth]{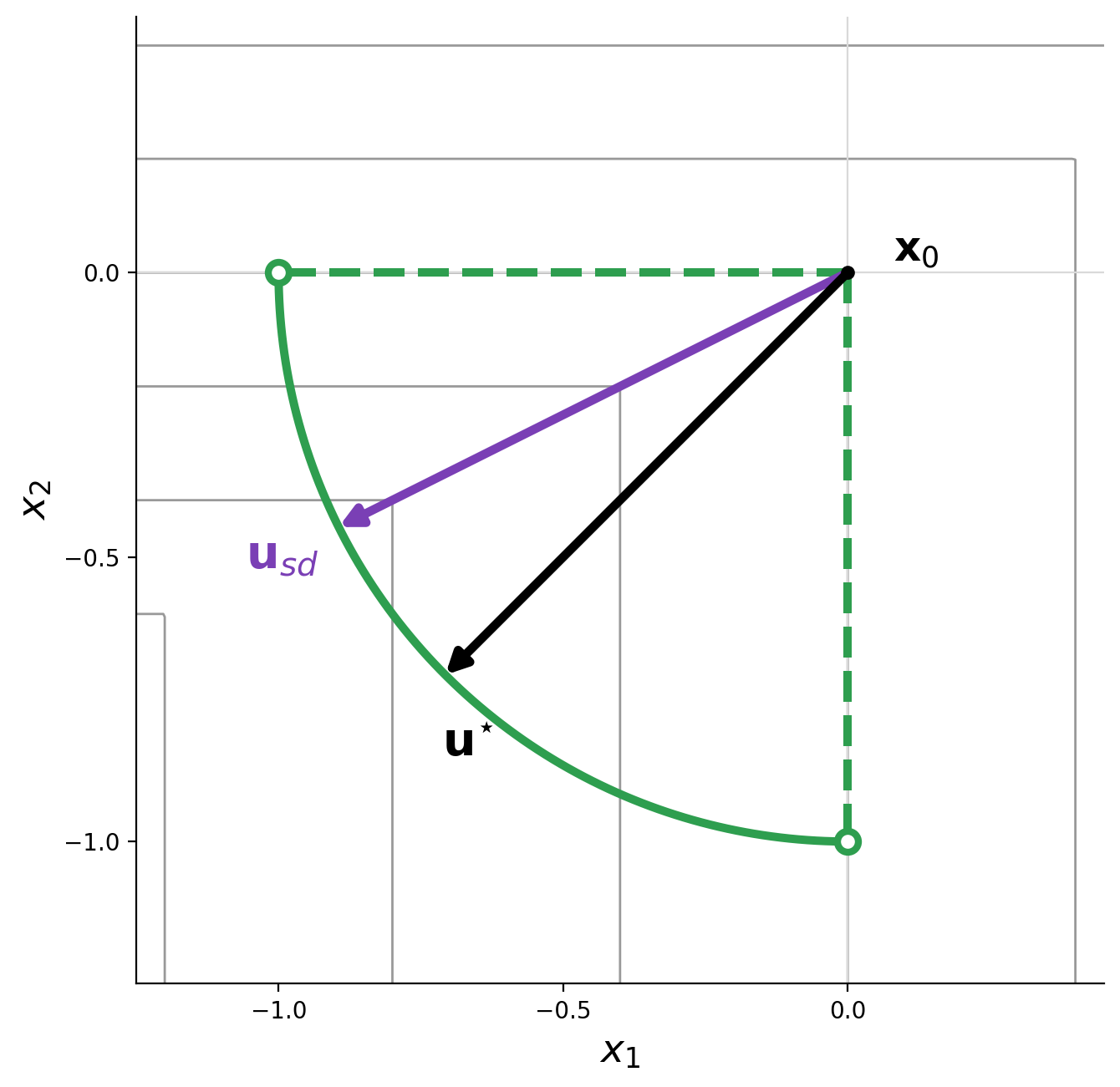}
    \caption{The cosine vector $\uu^*$ of $F$ at $\xo=\zero$ is not equal to the steepest descent direction $\uu_{sd}$}
    \label{fig:UsdNotEqualUStar}
\end{figure}

Our second example considers the common regularization function $\ell_1$.  Recall, in $\R^n$,
    $$\|\x\|_1=\sum_{i=1}^n |x_i| = \max_{\ssig \in \{-1,1\}^n } \ssig^\top \x, $$
where the notation $\{-1,1\}^n$ means that each entry of the n-dimensional vector $\ssig$ is either $-1$ or 1.  Therefore, $\|\x\|_1$ is the maximum of $2^n$ linear functions. Each linear  function has the form
$f_{\ssig}(\x)=\ssig^\top\x\in\mathcal{C}^1$
and gradient equal to $\nabla f_{\ssig}=\ssig \neq \zero_n$.  Algorithm \ref{alg:maxFunction} provides us the tools to compute the cosine measure of this function as a simple formula of the number of zeros in the point $\x_0$. 

\begin{theorem}[Cosine Measure of the $\ell_1$ norm at $\xo$]
\label{thm:cosineMeasureEllOne}
Let $F(\x)=\|\x\|_1$ and let $\xo\in\R^n, n\geq 2$.  Let $m$ be the number of $0$s in $\x_0$.  Then 
    $$\CM(F;\x_0) = \sqrt{\frac{m}{n}} 
    \quad \mbox{and} \quad
    \CV(F;\x_0) =
    \left\{
        \begin{array}{rl}
            \Sn & \mbox{if } m=n \\
            \frac{-\bfs}{\sqrt{n-m}}& \mbox{if } m<n,
        \end{array}
    \right\}
    $$
where $\bfs \in\{-1,0,1\}^n$ is defined by
\begin{equation}
\label{eq:signvec}
  s_j = \sgn\bigl((\xo)_j\bigr)
  =
  \begin{cases}
    +1 & (\xo)_j > 0,\\
    \phantom{+}0 & (\xo)_j = 0,\\
    -1 & (\xo)_j < 0,
  \end{cases}
  \qquad j\in \{1, 2,\ldots,n\}.
\end{equation}
\end{theorem}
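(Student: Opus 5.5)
The plan is to apply Algorithm \ref{alg:maxFunction} (whose correctness is the preceding corollary) to the representation $\|\x\|_1=\max_{\ssig\in\{-1,1\}^n}\ssig^\top\x$. All gradients $\ssig$ are nonzero, so Step 0 does not stop. First I will identify the active index set at $\xo$: $\ssig^\top\xo=\|\xo\|_1$ holds iff $\sigma_j=s_j$ for every $j$ with $(\xo)_j\neq 0$, with $\sigma_j\in\{-1,1\}$ free on the zero set $Z=\{j:(\xo)_j=0\}$, $|Z|=m$. Hence $|I(\xo)|=2^m$ and
$$\Pe=\left\{-\ssig/\sqrt{n}\;:\;\ssig\in\{-1,1\}^n,\ \sigma_j=s_j \text{ for } j\notin Z\right\}.$$
If $m=0$ there is a single active function, so $|I(\xo)|=1$ and Step 0 returns $\CM=0=\sqrt{0/n}$ with cosine vector $-\bfs/\|\bfs\|=-\bfs/\sqrt{n}$, which agrees with the claim. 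For $m\geq 1$, $|I(\xo)|\geq 2$ and I will solve the SOCP in Step (1.2).

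The main step is to evaluate $t^*=\max_{\|\uu\|\le 1}\min_{\pee\in\Pe}\pee^\top\uu$. Writing $\uu=(\uu_Z,\uu_{Z^c})$, for every admissible $\ssig$ we get $-\sqrt{n}\,\pee^\top\uu=\sum_{j\notin Z}s_ju_j+\sum_{j\in Z}\sigma_ju_j$. Since the signs on $Z$ range over all choices, the minimum over $\Pe$ equals $\bigl(-\sum_{j\notin Z}s_ju_j-\sum_{j\in Z}|u_j|\bigr)/\sqrt{n}$. Maximizing this over the unit ball forces $\uu_Z=\zero$: nonzero entries there only decrease the objective while consuming norm. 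What remains is to maximize $-\bfs^\top\uu$ subject to $\|\uu\|\le1$ with $\uu$ supported on $Z^c$. By Cauchy--Schwarz the unique maximizer is $\uu^*=-\bfs/\|\bfs\|=-\bfs/\sqrt{n-m}$ (when $m<n$), giving $t^*=\sqrt{n-m}/\sqrt{n}$. When $m=n$ the objective is $-\|\uu\|_1/\sqrt n$, so $t^*=0$ is attained at $\uu=\zero$.

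Step (2.1) then gives $\CM(F;\xo)=\sqrt{1-(n-m)/n}=\sqrt{m/n}$. Step (2.2) gives the cosine vector set: $\{-\bfs/\sqrt{n-m}\}$ when $t^*>0$, i.e. $m<n$, and $\Sn$ when $t^*=0$, i.e. $m=n$. The case $m=n$ can also be checked directly: every unit direction increases $\|\cdot\|_1$ from the origin, so $D^c=\Sn$ and Theorem \ref{thm:maintheorem}(iv) applies. The only delicate point is the reduction of the inner minimum to the $-\sum_{j\in Z}|u_j|$ form and the argument that $\uu_Z=\zero$ at the optimum. The uniqueness claim in the cosine vector set comes from equality in Cauchy--Schwarz, since any mass placed on $Z$ strictly lowers $t$.
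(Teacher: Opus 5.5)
Your proposal is correct and follows essentially the same route as the paper: you apply Algorithm~\ref{alg:maxFunction} to the $2^m$ active sign vectors, collapse the SOCP constraints to $-\sum_{j\notin Z}s_ju_j-\sum_{j\in Z}|u_j|\ge\sqrt{n}\,t$, note that this forces $u_j=0$ on $Z$, and read off $t^*=\sqrt{(n-m)/n}$. The differences are minor refinements: you treat $m=0$ through Step~0, which the paper glosses over; you get the $m=n$ case from $t^*=0$ rather than directly from Theorem~\ref{thm:maintheorem}; and you justify uniqueness of $\uu^*$ by the equality case of Cauchy--Schwarz.
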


\begin{proof}If $m=n$, then $\x_0=\zero$ and the result follows from Theorem \ref{thm:maintheorem}.

Suppose $m<n$ and define the nonzero and zero index sets respectively by
\begin{equation*}
  N=\bigl\{j:(\xo)_j\neq 0\bigr\},\qquad
  Z=\bigl\{j:(\xo)_j=0\bigr\},\qquad
  |N|=n-m,\quad|Z|=m.
\end{equation*}
The active negative gradient set at $\x_0$ can be written as
    $$\tilde{\pp} = \left\{ \bfs : \begin{array}{rl}
        \sigma_i &= -s_i \quad i \in N, \\
        \sigma_i&=\pm 1 \quad i \in Z
        \end{array}\right\}.$$
Since $\|\bfs\|=\sqrt{n}$, the normalized active negative gradient set at $\x_0$ can be written as
    $$\pp = \left\{ \pee : \begin{array}{rl}
        p_i &= -s_i/\sqrt{n} \quad i \in N, \\
        p_i&=\pm 1/\sqrt{n} \quad i \in Z
        \end{array}\right\}.$$
Thus, the SOCP in Algorithm \ref{alg:maxFunction} becomes
$$\begin{array}{rcl}
&&\displaystyle\max_{t,\;\uu} \{t:\|\uu\| \;\le\; 1, \pee^\top\uu \;\ge\; t
    \quad \forall\,\pee\in\mathcal{P} \; \}\\
    &=&\displaystyle \max_{t,\;\uu} \left\{t:\|\uu\| \;\le\; 1,
    -\sum_{i \in N} s_i u_i - \sum_{i \in Z} |u_i|
    \;\ge\; \sqrt{n} t \right\} \\
    &=&\displaystyle \max_{t,\;\uu} \left\{t:\|\uu\| \;\le\; 1,
    -\sum_{i \in N} s_i u_i 
    \;\ge\; \sqrt{n} t, \quad u_i =0  \quad \forall i \; \in Z\right\}. \\
\end{array}$$
This is solved when  
    $$u^*_i = \left\{\begin{array}{rl}
    \dfrac{-s_i}{\sqrt{n-m}} & \quad\mbox{if } i \in N \\
    0 & \quad \mbox{if } i \in Z
    \end{array}\right\}
    \qquad \mbox{and} \qquad
    t^* = \frac{\sqrt{n-m}}{\sqrt{n}}.$$
Applying Step 2 of Algorithm \ref{alg:maxFunction}, we find
    $$\CM(F;\x_0) = \sqrt{1-(t^*)^2} = \sqrt{\frac{m}{n}} 
    \quad \mbox{and} \quad
    \CV(F;\x_0) =
    \left\{
        \begin{array}{rl}
             \Sn   &\mbox{if } m=n \\
            \dfrac{-\bfs}{\sqrt{n-m}} &\mbox{if } m<n,
        \end{array}
    \right\}.$$
   \qed
\end{proof}

Combining Theorem \ref{thm:cosineMeasureEllOne} with Theorem \ref{thm:valueofcmf}, we gain insight regarding the cosine measure required to ensure a set contains a descent direction of the $\ell_1$ norm at a given point.

\begin{corollary}
   Let $F:\R^n \to \R: \x \mapsto \Vert \x\Vert_1, n \geq 2$.  Suppose $\x_0$ has $m$ zero entries.  If $E$ is a set of vectors in $\R^n$ with cosine measure $\CM(E)>\frac{\sqrt{m}}{\sqrt{n}}$, then $E$ contains a descent direction for $F$ at $\x_0$.  (If $m=n$, this is impossible.)
\end{corollary}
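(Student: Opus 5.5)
This follows by combining Theorem~\ref{thm:cosineMeasureEllOne} with Theorem~\ref{thm:valueofcmf}. First, Theorem~\ref{thm:cosineMeasureEllOne} gives $\CM(F;\x_0)=\sqrt{m/n}$ for the $\ell_1$ norm at a point with $m$ zero entries, $n\ge 2$. The hypothesis $\CM(E)>\sqrt{m}/\sqrt{n}$ is then exactly $\CM(E)>\CM(F;\x_0)$, which is the hypothesis of Theorem~\ref{thm:valueofcmf}. That theorem concludes that $E$ contains a descent direction of $F$ at $\x_0$.

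Some bookkeeping is needed before invoking Theorem~\ref{thm:valueofcmf}, since it is stated for closed sets of unit vectors.
\begin{enumerate}[(a)]
\item If $E$ is not made of unit vectors, I replace it by its normalization $\{\dd/\|\dd\|:\dd\in E,\ \dd\neq\zero\}$. This is the convention under which the cosine measure is defined, and being a descent direction depends only on the direction (the definition is stated for unit vectors).
\item If $E$ is not closed, I avoid the closedness requirement rather than pass to the closure, because a limit of descent directions need not be one. I rerun the short argument of Theorem~\ref{thm:valueofcmf} directly. Let $\uu\in\CV(F;\x_0)$. Then
$$\sup_{\vv\in E}\uu^\top\vv\ \ge\ \CM(E)\ >\ \CM(F;\x_0)\ =\ \sup_{\dd\in D^c}\uu^\top\dd .$$
The strict inequality lets me pick $\vv\in E$ with $\uu^\top\vv>\sup_{\dd\in D^c}\uu^\top\dd$, so $\vv\notin D^c$, i.e.\ $\vv\in D$. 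No attainment of the supremum is needed.
\item When $D^c=\emptyset$ every direction is a descent direction. This case does not actually arise for the $\ell_1$ norm, but the argument covers it anyway.
\end{enumerate}

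For the parenthetical remark with $m=n$: here $\sqrt{m/n}=1$, so the hypothesis would require $\CM(E)>1$. This is impossible, because the cosine measure of any set of unit vectors is at most $1$ by Cauchy--Schwarz. This is consistent with $\x_0=\zero$ being the global minimizer of $\|\cdot\|_1$, so no descent direction exists there.

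I expect no real obstacle. The only delicate point is the closedness assumption in Theorem~\ref{thm:valueofcmf}, and the sup-based argument in (b) resolves it.
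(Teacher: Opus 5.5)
Your proposal is correct and is essentially the paper's argument: the paper obtains the corollary simply by combining Theorem~\ref{thm:cosineMeasureEllOne} with Theorem~\ref{thm:valueofcmf}. Your step (b) picks $\vv\in E$ with $\uu^\top\vv>\CM(F;\x_0)$ directly from the strict inequality, which is a worthwhile refinement because it removes the closedness hypothesis on $E$ that the paper's one-line combination silently needs (though the real risk in passing to $\cl E$ is that a limit of non-descent directions can be a descent direction, not the reverse as you phrase it).
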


\section{Conclusion} \label{sec:conclusion}

Several future research directions have emerged from this work.  An analysis of the finite-max function using  approximate gradients  to find the cosine measure value could be explored. In practice, the exact gradients are usually unknown and using  approximation of the gradients might be necessary. Gradients can be approximated using \emph{generalized simplex gradients} \cite{hare2020calculus,hare2020error} and bounds between the exact cosine measure  and the approximate cosine measure could be developed. Another obvious research directions is to investigate the cosine measure of other classical nonsmooth functions. For instance, the $\ell_1$ norm function of $A\x-\mathbf{b},$ the maximum eigenvalue function,  the nuclear norm function and the Hinge loss are reasonable choices.  

The cardinality of active set of vectors for a sine vector (or a cosine vector) is still unclear.  It is known that the active set may not  contain a basis of $\R^n$ in general. Depending on the properties of the set $D^c$,  it might be possible to develop results on the cardinality of the active set.

Another important topic to investigate is  the cosine measure of a set $\pspan(\Set)$  such that $\pspan(\Set)$ has an infinite frame. In this paper, we have assumed that all frames are finite.  However, this is not always the case.  One of the most famous examples of an infinite frame is  the \emph{Lorentz cone}: in $\R^3$, $\mathcal{L}=\pspan(\Set)$ where $\Set=\{ \vv \in \R^3: \vv=\bbm \cos(\theta)& \sin(\theta)&1\ebm^\top, \theta \in [0,2\pi) \}].$ In some cases, it might be possible  to develop results related to cosine measure.  Beyond such convex cones, nonconvex semialgebraic sets with explicitly characterized geometric structure provide another promising source of infinite directional families. In particular, bilinear constraint sets, including hyperbolas, possess continuously varying tangent and normal directions that could naturally be studied from the perspective of cosine measures \cite{bauschke2023projections,lal2023nonconvex}. These examples may help clarify when cosine measures associated with infinite families are computable and how the geometry of the underlying set influences their values and active directions.

\medskip

\noindent \textbf{Acknowledgments} The authors would like to thank Manish Krishan Lal for  his valuable comments and  helpful suggestions.  The authours would also like to thank  Dana Abukhalaf for her assistance in  proving Proposition \ref{prop:cvEqualPolar}.

\bibliographystyle{siam}
\bibliography{bibliography}
\end{document}